\DeclarePairedDelimiter\floor{\lfloor}{\rfloor}
\newcommand{\newreptheorem}[2]
{\newtheorem*{rep@#1}{\rep@title}\newenvironment{rep#1}[1]
{\def\rep@title{#2 \ref*{##1}}
\begin{rep@#1}}
{\end{rep@#1}}}
\theoremstyle{plain}
\newtheorem*{theorem*}{Theorem}
\newtheorem{theorem}{Theorem}[section]
\newtheorem{lemma}[theorem]{Lemma}
\newtheorem{corollary}[theorem]{Corollary}
\theoremstyle{definition}
\newtheorem{definition}[theorem]{Definition}
\newtheorem{example}[theorem]{Example}
\newtheorem{remark}[theorem]{Remark}
\newtheoremstyle{case}{}{}{}{}{}{:}{ }{}
\theoremstyle{case}
\newcounter{casecount}
\newcommand*{\f}{\mathbb{F}}
\newcommand*{\fpg}{\mathbb{F}_{p^n}G}
\begin{document}

\begin{center}
{\fontsize{14}{20}\bf Derivations on Group Algebras with Coding Theory Applications}\end{center}

\begin{center}
Leo Creedon, Kieran Hughes\\
\bigskip
Institute of Technology Sligo, Ireland\\
\bigskip
\texttt{creedon.leo@itsligo.ie} \\
\texttt{kieran.hughes@mail.itsligo.ie} 
\end{center}

\begin{abstract}
    This paper classifies the derivations of group algebras in terms of the generators and defining relations of the group.
    If $RG$ is a group ring, where $R$ is commutative and $S$ is a set of generators of $G$ then necessary and sufficient conditions on a map from $S$ to $RG$ are established, such that the map can be extended to an $R$-derivation of $RG$.
    Derivations are shown to be trivial for semisimple group algebras of abelian groups. 
    The derivations of finite group algebras are constructed and listed in the commutative case and in the case of dihedral groups.
    In the dihedral case, the inner derivations are also classified.
    Lastly, these results are applied to construct well known binary codes as images of derivations of group algebras.
    
\end{abstract}

\section{Introduction}\label{sectintro}
Group rings and derivations of rings have both been studied for more than 60 years. For a history of group rings see  Polcino Milies and Sehgal \cite{Polcino} and for a survey article on derivations see Ashraf, Ali, and Haetinger \cite{Haetinger}. 
The results of Posner \cite{posner1957derivations} and Herstein \cite{herstein1978note} attracted particular attention.  
Prime, semiprime and 2-torsion free rings were a focus of the resulting research.

Derivations of $C^*$-algebras have been studied by several authors. In~\cite{sakai1971derivations}, Sakai proved that every derivation of a simple $C^*$-algebra becomes inner in its multiplier algebra.
Mathieu and Villena, in~\cite{mathieu2003structure} study the structure of Lie derivations on $C^*$-algebras.
In the 2000 paper Derivations on Group Algebras \cite{ghahramani2000derivations},
Ghahramani, Runde and Willis, examine the first cohomology space of the group algebra $L^1(G)$, where $G$ is a locally compact group.
The derivation problem asks whether every derivation from $L^1(G)$ to $M(G)$ is inner, where $G$ is a locally compact group and $M(G)$ is the multiplier algebra of $L^1(G)$. It was solved by Losert \cite{Losert}. 
The 2017 preprint "Derivations of Group Algebras", \cite{arutyunov2017derivations} by 
Arutyunov, Mishchenko and Shtern describes the outer derivations of $L^1(G)$.

Group rings have been used to construct new codes as well as to  study existing codes. 
In \cite{hurley2009codes} Hurley and Hurley present techniques for constructing codes from group rings. 
The codes constructed consist primarily of two types, zero-divor codes and unit-derived codes. 
The structure of group ring codes is examined in \cite{hughes2001structure}. 
The author gives a decomposition of a group ring code into twisted group ring codes and proves the nonexistence of self-dual group ring codes in particular cases.

Derivations have also been employed in coding theory. 
In \cite{boucher2014linear} codes are constructed as modules over skew polynomial rings, where the multiplication is defined by a derivation and an automorphism. 
In this paper we are primarily concerned with derivations of group algebras and their application to coding theory.  

However, there has not been as much research into derivations of group algebras with positive characteristic. Notable exceptions include Smith \cite{smith1978derivations}, Spiegel \cite{spiegel1994derivations}  and Ferrero, Giambruno and Polcino Milies \cite{PolcinoNote}. In the latter paper the authors prove the following theorem.
\begin{theorem}\label{PolcinoNote}\cite{PolcinoNote}
Let $R$ be a semiprime ring and $G$ a torsion group such that $[G : Z(G)] < \infty$, where $Z(G)$ denotes the center of $G$. Suppose that either char $R = 0$ or for every characteristic $p$ of $R,\ p \notdivides o(g)$, for all $g \in G$. Then every $R$-derivation of $RG$ is inner.
\end{theorem}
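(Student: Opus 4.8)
The plan is to recast an $R$-derivation as a cohomology class and then show the relevant $H^1$ vanishes. Since $RG$ is generated as an $R$-algebra by $G$ and $D$ is $R$-linear, $D$ is determined by its restriction to $G$. First I would set $e(g)=D(g)g^{-1}$ for $g\in G$; the Leibniz rule $D(gh)=D(g)h+gD(h)$ then translates into the cocycle identity $e(gh)=e(g)+g\,e(h)\,g^{-1}$, so that $e$ is a $1$-cocycle for the action of $G$ on the bimodule $RG$ by conjugation. Under this dictionary $D$ is inner, say $D(x)=ax-xa$, exactly when $e(g)=a-gag^{-1}$ is the corresponding coboundary. Hence it suffices to prove that $H^1(G,RG)=0$ for the conjugation action.

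Next I would exploit $[G:Z(G)]=n<\infty$. Central elements conjugate trivially, so the action factors through the finite quotient $H=G/Z(G)$. I would first check that $e$ vanishes on $Z(G)$: for central $z$ of order $k$ the cocycle identity gives $k\,e(z)=0$, and the characteristic hypothesis forces $R$ (hence $RG$) to have no $k$-torsion, so $e(z)=0$; combined with the cocycle identity this shows $e$ descends to a cocycle $\bar e\colon H\to RG$. By orbit--stabilizer every conjugacy class $C$ of $G$ is finite, its size dividing $n$, so $RG=\bigoplus_{C}R[C]$ as conjugation $H$-modules, the sum running over conjugacy classes. For $x\in C$ the $H$-set $C$ is isomorphic to $H/\bar C$ with $\bar C=C_G(x)/Z(G)$, whence $R[C]\cong\operatorname{Ind}_{\bar C}^{H}R$ as $H$-modules.

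Then I would compute. Because $H$ is finite (so of type $\mathrm{FP}_\infty$), $H^1(H,-)$ commutes with the direct sum, and Shapiro's lemma gives $H^1(H,R[C])\cong H^1(\bar C,R)=\operatorname{Hom}(\bar C,(R,+))$ for the trivial action on $R$. Each $\bar C$ is a finite group, so any homomorphism $\phi\colon\bar C\to(R,+)$ sends an element of order $k$ to a $k$-torsion element of $R$; since every such $k$ divides $|H|$ and hence, by Cauchy applied to $G/Z(G)$, divides the order of some element of $G$, the hypothesis makes $R$ free of $k$-torsion, forcing $\phi=0$. Therefore $H^1(H,RG)=0$, the cocycle $\bar e$ is a coboundary, and unwinding the dictionary produces $a\in RG$ with $D(x)=ax-xa$ for all $x$.

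The hard part, I expect, is resisting the naive Maschke-style averaging by $1/n$: since $R$ need not be a field (e.g.\ $R=\mathbb{Z}$), $n=[G:Z(G)]$ need not be invertible and the usual "divide by the group order" trick is unavailable. The resolution is that invertibility is not needed at all---only that $R$ carries no torsion at the primes dividing the element orders, which is exactly what the characteristic hypothesis provides once \emph{semiprimeness} is used to realize $R$ as a subdirect product of prime rings (in a prime ring, $p$-torsion occurs only in characteristic $p$). The two remaining technical points I would verify with care are that this torsion-freeness really is equivalent to the stated condition on the characteristic, and that $H^1$ commutes with the possibly infinite direct sum over conjugacy classes, which is legitimate precisely because $H$ is finite.
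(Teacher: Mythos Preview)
The paper does not prove this theorem at all: it is quoted verbatim from Ferrero, Giambruno and Polcino~Milies and used as a black box (see the citation attached to the statement). There is therefore no in-paper proof to compare your attempt against.

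That said, your cohomological outline is essentially correct and self-contained. The dictionary $e(g)=D(g)g^{-1}$ between $R$-derivations and $1$-cocycles for the conjugation action is standard; the reduction to $H=G/Z(G)$ goes through because the torsion hypothesis on element orders, combined with the subdirect-product description of a semiprime ring, forces $R$ (hence $RG$) to be $o(z)$-torsion-free for every central $z$, so $e$ vanishes on $Z(G)$ and descends. The decomposition $RG=\bigoplus_{C}R[C]$ into finite conjugacy-class pieces, the identification $R[C]\cong\mathrm{Ind}_{\bar C}^{H}R$, Shapiro's lemma (legitimate since $H$ is finite, so induction and coinduction coincide), and the computation $H^{1}(\bar C,R)=\mathrm{Hom}(\bar C,R)$ are all correct. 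Your final torsion step is also fine: if $p$ divides the order of some element of $H=G/Z(G)$ then, because $G$ is torsion, any lift to $G$ has order divisible by $p$, so the hypothesis indeed rules $p$ out as a ``characteristic'' of $R$ and kills the relevant $\mathrm{Hom}$ groups. The point you flag about $H^{1}(H,-)$ commuting with the (possibly infinite) direct sum over conjugacy classes is handled exactly as you say, by finiteness of $H$.

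One small wording issue: you write that each order $k$ of an element of $\bar C$ ``divides the order of some element of $G$''. What you actually need and what your Cauchy argument gives is that every prime divisor of $k$ divides the order of some element of $G$; that is enough, and you should phrase it that way.
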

In this paper we are particularly interested in finite group algebras. This is motivated in part by applications to error correcting codes.
Theorems~\ref{PolcinoNote} and \ref{deronpregular} direct our focus, in the commutative case, to the study of derivations of modular (nonsemisimple) group algebras with positive characteristic.

Theorem~\ref{dKZero} shows that when $K$ is an algebraic extension of a prime field all derivations of a $K$-algebra are $K$-derivations.
If $RG$ is a group ring, where $R$ is commutative and $S$ is a set of generators of $G$ then necessary and sufficient conditions on a map $f \colon S \to RG$ are established, in Theorem~\ref{CompatabilityThm}, such that $f$ can be extended to an $R$-derivation of $RG$.
Section~\ref{sectApplications} outlines some applications of  
the results of Section~\ref{sectFGGR}. 
All derivations of finite commutative group algebras of positive characteristic are determined in Theorem~\ref{ThmDerFiniteAbelG}. 
If $G$ is a finite abelian group and $K$ a finite field of positive characteristic $p$ then the image of a minimum set of generators of the Sylow $p$-subgroup of $G$ under a derivation of $KG$ can be chosen arbitrarily, however this is not always the case in the noncommutative setting.
An inner derivation of a ring $R$ maps $a \in R$ to $ab - ba$, for some element $b \in R$.
In the case of finite dihedral group algebras of characteristic 2, a basis is given for the space of derivations in Theorem~\ref{Thm:BasisDer(fpmD2n)} and also for those that are inner in Theorem~\ref{Thm:basisofInn}.
  
The extended binary Golay $[24,12,8]$ code and the extended binary quadratic residue $[48,24,12]$ code are both presented as images of derivations of group algebras in Section~\ref{subsectCoding}.

\begin{definition}
Notation: $\mathbb{N}, \ \mathbb{Z}$ and $\mathbb{Q}$ denote the natural numbers, the integers and the rational numbers, and $\f_{p^n}$ denotes the finite field with $p^n$ elements. The group ring $RG$ denotes the set of all formal linear combinations of the form $\sum_{g \in G} a_g g$, of finite support where $a_g \in R$, together with the operations of addition (componentwise) and multiplication defined as $(\sum_{g \in G} a_g g) (\sum_{h \in G} b_h h) = \sum_{g,h \in G} a_g b_h gh$. We adopt the usual convention that empty sums are $0$ and empty products are $1$.
\end{definition}

\begin{definition}\label{derdef}
A \emph{derivation} of a ring $R$ is a mapping $d \colon R \to R$ satisfying 
\begin{gather}
d(a+b) = d(a) + d(b), \quad \quad \text{ for all } a, b \in R.
\label{eq1}\\
d(ab) = d(a)b + ad(b), \quad \quad \text{ for all } a, b \in R.
\label{eq2}
\end{gather}
Equation~(\ref{eq2}) is known as Leibniz's rule. 
Write $\gls{derR}$ for the set of derivations of a ring $R$. Note that if $R$ is a unital ring then $d(1) = 0$, since $d(1) = d\big(1(1)\big) = d(1)1 + 1d(1)$.
\end{definition}
\begin{definition}\label{defsdotd}
Let $d \in \gls{derR}$ and $r\in R$ for a ring $R$. Then the map 
$r \cdot d \colon R \to R$ is defined as $a \mapsto rd(a)$ for all $a \in R$. 
\end{definition}

\begin{lemma}\label{ZRModule}
Let $Z$ be a central subring of a ring $R$. Then \gls{derR} together with the action $\cdot$ is a $Z$-module.
\end{lemma}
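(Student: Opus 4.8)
The plan is to unwind the definition of a $Z$-module and verify each of its components in turn: first that $\mathrm{Der}(R)$ is an abelian group under pointwise addition of maps, second that the action $z \cdot d$ actually lands in $\mathrm{Der}(R)$ for $z \in Z$ (i.e.\ that the action is well defined), and third that the four module axioms hold. The scalars are drawn from the central subring $Z$, and I expect centrality to be used in exactly one place, which I flag below as the crux.

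First I would check the abelian group structure. Given $d_1, d_2 \in \mathrm{Der}(R)$, their pointwise sum is additive on sums because each summand is, and it satisfies Leibniz's rule since
\begin{align*}
(d_1 + d_2)(ab) &= d_1(a)b + a d_1(b) + d_2(a)b + a d_2(b)\\
&= (d_1+d_2)(a)\,b + a\,(d_1+d_2)(b).
\end{align*}
Hence $d_1 + d_2 \in \mathrm{Der}(R)$. The zero map is trivially a derivation and serves as the identity, the pointwise negative of a derivation is again a derivation, and addition of maps is commutative and associative, so $\mathrm{Der}(R)$ is an abelian group.

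The key step is to show the action is well defined, i.e.\ that $z \cdot d \in \mathrm{Der}(R)$ whenever $z \in Z$ and $d \in \mathrm{Der}(R)$. Additivity of $z \cdot d$ is immediate from additivity of $d$ and left distributivity in $R$. For Leibniz's rule, using Definition~\ref{defsdotd} I would compute
\begin{gather*}
(z \cdot d)(ab) = z\,d(ab) = z\big(d(a)b + a\,d(b)\big) = z\,d(a)\,b + z\,a\,d(b),
\end{gather*}
and compare this against the target expression $(z\cdot d)(a)\,b + a\,(z\cdot d)(b) = z\,d(a)\,b + a\,z\,d(b)$. The two agree precisely when $z\,a\,d(b) = a\,z\,d(b)$, that is, when $z$ commutes with $a$. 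This is the one and only place the hypothesis is needed, and it is exactly where $Z$ being \emph{central} in $R$ is essential; for a noncentral $r$ the map $r \cdot d$ need not be a derivation. I would therefore present this Leibniz computation as the main obstacle of the proof, with centrality resolving it.

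Finally I would verify the module axioms by pointwise evaluation, which are routine. For all $a \in R$, $z, z_1, z_2 \in Z$ and $d, d_1, d_2 \in \mathrm{Der}(R)$ one has $\big(z \cdot (d_1 + d_2)\big)(a) = z\,d_1(a) + z\,d_2(a) = (z\cdot d_1 + z \cdot d_2)(a)$, $\big((z_1 + z_2)\cdot d\big)(a) = z_1 d(a) + z_2 d(a)$, and $\big((z_1 z_2)\cdot d\big)(a) = z_1\big(z_2 d(a)\big) = \big(z_1 \cdot (z_2 \cdot d)\big)(a)$, using only distributivity and associativity in $R$; and if $Z$ is unital then $1 \cdot d = d$. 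Together with the previous two steps, this establishes that $\mathrm{Der}(R)$ is a $Z$-module.
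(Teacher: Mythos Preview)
Your proof is correct. The paper itself states this lemma without proof, so there is nothing to compare against; your argument supplies exactly the routine verification the paper omits, and you have correctly identified that centrality of $Z$ is needed precisely once, in the Leibniz check for $z\cdot d$.
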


\begin{definition}
Let $RG$ be a group ring. Then a derivation $d \colon RG \to RG$ is an \emph{R-derivation} if $d(R) = \{0\}$.
\end{definition}

\begin{definition}
Given a ring $R$ and $a, b \in R$, define the \emph{Lie commutator} $[a, b] = ab - ba$.
A derivation $d$ on a ring $R$ is \emph{inner} if for all $a \in R$ we have $d(a) = ab - ba$ for some $b \in R$. 
In this case we write $d=d_b$.
\end{definition}

\section{Derivations of Group Rings}\label{sectFGGR}

In this section we establish necessary and sufficient conditions on a map $f \colon S \to RG$, such that $f$ can be extended to an $R$-derivation of the group ring $RG$, where $S$ is a set of generators of $G$ and $R$ is commutative. 
First, some identities and preliminary results are presented. 

\begin{lemma}\label{LogTablesTheorem}
Let $d$ be a derivation of a ring $R$. Then
\begin{flalign}
&(i)\ \  \quad d(\textstyle \prod_{i=1}^{m} a_i) = \sum_{i = 1}^{m} \left( (\prod_{j = 1}^{i-1}a_j) d(a_i) (\prod_{j = i+1}^{m}a_j) \right), \quad \text{ for all } a_i \text{ in }R.& \label{EqDiffRuleNonComm2} \\
&(ii)\ \quad d(a^m) = \textstyle \sum_{i=0}^{m-1}a^{i}d(a)a^{(m - 1 - i)}, \quad \text{for all } a \in R \text{ and } m \in \gls{N}.& \label{EqDiffRuleNonComm} \\
&(iii) \quad \textstyle  \sum_{i=0}^{n-1}a^{i}d(a)a^{(n - 1 - i)} = 0, \quad \text{ for all units } a  \text{ in $R$ of order } n.& \label{EqDiffRuleNonCommUnit} \\
&(iv)\  \quad d(a^k) = ka^{k-1}d(a), \quad \text{for all } a \in R \text{ which commute with } d(a) \text{ and } k \in \gls{N}.& \label{EqDiffRuleComm} \\
&(v)\ \  \quad d(a^k) = ka^{k-1}d(a), \quad \text{ for all } 
\text{ units } a \in R \text{ which commute with }d(a) \text{ and } k \in \gls{Z}. & \label{EqDiffRuleCommUnit}
\end{flalign}
\end{lemma}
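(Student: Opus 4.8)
The plan is to prove (i) first by induction on $m$, since (ii)--(v) all follow from it as special cases or short consequences. The only tool needed throughout is the Leibniz rule~(\ref{eq2}) together with the observation $d(1)=0$ from Definition~\ref{derdef}.

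For (i), the base case $m=1$ reads $d(a_1)=d(a_1)$ once we use the convention that empty products equal $1$. For the inductive step I would write $\prod_{i=1}^{m+1} a_i = \left( \prod_{i=1}^{m} a_i \right) a_{m+1}$ and apply~(\ref{eq2}) to split $d$ of this product as $d\!\left( \prod_{i=1}^{m} a_i \right) a_{m+1} + \left( \prod_{i=1}^{m} a_i \right) d(a_{m+1})$. Substituting the induction hypothesis into the first summand produces exactly the terms indexed by $i \le m$, while the second summand is the $i=m+1$ term; collecting them gives the claimed formula. For (ii), I would specialise (i) by setting $a_i = a$ for every $i$: then $\prod_{j=1}^{i-1} a_j = a^{i-1}$ and $\prod_{j=i+1}^{m} a_j = a^{m-i}$, so the $i$-th summand is $a^{i-1} d(a) a^{m-i}$, and reindexing by $k=i-1$ yields the stated sum. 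For (iii), since $a$ is a unit of order $n$ the ring is unital and $a^n = 1$, hence $d(a^n) = d(1) = 0$; applying (ii) with $m=n$ then gives the vanishing sum directly.

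For (iv), I would begin from (ii) and use that $a$ commutes with $d(a)$ (hence with every power of $a$): each summand $a^i d(a) a^{m-1-i}$ collapses to $a^{m-1} d(a)$, and since there are $m$ identical terms the sum is $m a^{m-1} d(a)$. Part (v) is the one requiring the most care, because of the negative exponents. For $k \ge 0$ it is immediate from (iv), with $k=0$ giving $d(1)=0$. For $k<0$ I would first compute $d(a^{-1})$ by applying $d$ to $a a^{-1} = 1$, which gives $d(a)a^{-1} + a\,d(a^{-1}) = 0$ and hence $d(a^{-1}) = -a^{-1} d(a) a^{-1}$. I would then check that $a^{-1}$ itself commutes with $d(a)$, by multiplying $a\,d(a) = d(a)\,a$ on both sides by $a^{-1}$, which simplifies the expression to $d(a^{-1}) = -a^{-2} d(a)$ and also shows $a^{-1}$ commutes with $d(a^{-1})$. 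Writing $a^k = (a^{-1})^{-k}$ with $-k>0$ and applying (iv) to the element $a^{-1}$ then produces $d(a^k) = k a^{k-1} d(a)$ after simplifying the exponents and signs.

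The main obstacle, such as it is, lies only in (v): one must carefully establish the formula for $d(a^{-1})$ and confirm that commutativity of $a$ with $d(a)$ is inherited by $a^{-1}$ before (iv) can be invoked. Everything else is a routine cascade from the inductive identity (i).
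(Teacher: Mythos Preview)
Your proof is correct and follows essentially the same route as the paper: induction on $m$ for (i), specialisation to equal factors for (ii), the observation $d(a^n)=d(1)=0$ for (iii), collapsing the sum via commutativity for (iv), and for (v) deriving $d(a^{-1})=-a^{-2}d(a)$ from $d(aa^{-1})=0$ together with the inherited commutativity of $a^{-1}$ with $d(a)$, then applying (iv) to $a^{-1}$. The only cosmetic difference is the order in which you establish commutativity of $a^{-1}$ with $d(a)$ versus computing $d(a^{-1})$, which is immaterial.
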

\begin{proof}
$(i)$ We will prove Equation~\ref{EqDiffRuleNonComm2} by induction on $m$. 
Base case: $m = 1$. This is true as $d(a_1) = \sum_{i = 1}^{1} 1 d(a_1) 1$. 
Assume that $d(\prod_{i=1}^{m} a_i) = \sum_{i = 1}^{m} \left( ( \prod_{j = 1}^{i-1}a_j) d(a_i) ( \prod_{j = i+1}^{m}a_j) \right)$. Then
\[d(\prod_{i=1}^{m+1} a_i) = d(\prod_{i=1}^{m} a_i)a_{m+1} + \Big( \prod_{i=1}^{m} a_i\Big) d(a_{m+1}) = \sum_{i = 1}^{m} \Big( ( \prod_{j = 1}^{i-1}a_j) d(a_i) ( \prod_{j = i+1}^{m}a_j) \Big) a_{m+1} + \Big(\prod_{i=1}^{m} a_i\Big) d(a_{m+1})\]
\[= \sum_{i = 1}^{m+1} \Big( ( \prod_{j = 1}^{i-1}a_j) d(a_i) ( \prod_{j = i+1}^{m+1}a_j) \Big).\]
Therefore Equation~\ref{EqDiffRuleNonComm2} holds for all $m \in \gls{N}$. 

$(ii)$ Let $a_i = a$ in Equation~\ref{EqDiffRuleNonComm2}. Then for all $m \in \gls{N}$
\[d(a^{m}) = \sum_{i = 1}^{m} \Big( ( \prod_{j = 1}^{i-1}a) d(a) ( \prod_{j = i+1}^{m}a) \Big) = \sum_{i=1}^{m}a^{i-1}d(a)a^{(m-i)} = \sum_{i=0}^{m-1}a^{i}d(a)a^{(m-1-i)} .\]
$(iii)$ Setting $m = n$ in Equation~\ref{EqDiffRuleNonComm} implies 
\[0 = d(1) = d(a^n) = \sum_{i=0}^{n-1}a^{i}d(a)a^{(n - 1 - i)}.\]
$(iv)$ Let $a$ be an element of $R$ that commutes with $d(a)$. Then using Equation~\ref{EqDiffRuleNonComm} 
\[d(a^k) = \sum_{i=0}^{k-1}a^{i}d(a)a^{(k - 1 - i)} = \sum_{i=0}^{k-1}a^{k-1}d(a) = ka^{k-1}d(a).\]

$(v)$ Let $a$ be a unit which commutes with $d(a)$. Then $a^{-1}$ is also a unit which commutes with $d(a)$ since $a^{-1}d(a) = a^{-1}d(a)aa^{-1} = a^{-1}ad(a)a^{-1} = d(a)a^{-1}$. Therefore $0 = d(1) = d(a^{-1}a) = d(a^{-1})a + a^{-1}d(a)$ and so $d(a^{-1}) = -a^{-1}d(a)a^{-1} = -a^{-2}d(a)$. Moreover, $a^{-1}$ also commutes with $d(a^{-1})$ since $a^{-1}d(a^{-1}) = a^{-1}(-a^{-2}d(a)) = -a^{-2}d(a)a^{-1} = d(a^{-1})a^{-1}$. Therefore for any positive integer $k$ 
\[d(a^{-k}) = d((a^{-1})^k) = k(a^{-1})^{k-1}d(a^{-1}) = k(a^{-k+1})(-a^{-2}d(a)) = -k(a^{-k-1})d(a).\]
Furthermore, $0 = d(1) = d(a^0) = 0a^{-1}d(a)$ and so Equation~(\ref{EqDiffRuleCommUnit}) holds for all integers $k$.
\end{proof}

The following Theorem shows that when $K$ is an algebraic extension of a prime field all derivations of a $K$-algebra are $K$-derivations.
\begin{theorem}\label{dKZero}
Let $A$ be a $K$-algebra where $K$ is an algebraic extension of a prime field $F$ and let $d \in Der(A)$. 
Then $d(K) = \{ 0\} $ and $d$ is a $K$-linear map.
\end{theorem}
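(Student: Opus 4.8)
The plan is to reduce everything to the single claim that $d(K) = \{0\}$. Once that is known, $K$-linearity is immediate: for $\lambda \in K$ and $a \in A$, Leibniz's rule gives $d(\lambda a) = d(\lambda)a + \lambda d(a) = \lambda d(a)$, since $\lambda$ lies in the center of the $K$-algebra $A$ and $d(\lambda) = 0$. So I would spend the whole proof showing that $d$ annihilates $K$, proceeding in two stages: first on the prime field $F$, then on all of $K$ using algebraicity.

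First I would show $d$ vanishes on $F$. Since $A$ is unital, $d(1) = 0$, and additivity gives $d(n \cdot 1) = n\, d(1) = 0$ for every integer $n$. If $\operatorname{char} F = p$ then $F = \mathbb{F}_p$ consists precisely of these integer multiples of $1$, so $d|_F = 0$ already. If $\operatorname{char} F = 0$ then $F = \mathbb{Q}$, and for $m/n \in \mathbb{Q}$ I would apply $d$ to the identity $n \cdot (m/n) = m$ to obtain $n\, d(m/n) = 0$; because $A$ is a $\mathbb{Q}$-algebra the element $n \cdot 1$ is a unit, so $d(m/n) = 0$.

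Next, for an arbitrary $\alpha \in K$, I would use that $\alpha$ is algebraic over $F$. Let $f(x) = x^k + c_{k-1}x^{k-1} + \dots + c_0 \in F[x]$ be its minimal polynomial. Applying $d$ to $f(\alpha) = 0$ and using that $d$ kills each $c_i \in F$, that $\alpha$ is central and hence commutes with $d(\alpha)$, and Lemma~\ref{LogTablesTheorem}(iv) (so that $d(\alpha^i) = i\alpha^{i-1}d(\alpha)$), I obtain
\[ 0 = d\bigl(f(\alpha)\bigr) = \Bigl(k\alpha^{k-1} + \sum_{i=1}^{k-1} i\,c_i\,\alpha^{i-1}\Bigr) d(\alpha) = f'(\alpha)\, d(\alpha), \]
where $f'$ is the formal derivative. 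It then suffices to show $f'(\alpha) \neq 0$, for $f'(\alpha)$ lies in the field $K$ and a nonzero element of $K$ is a unit in $A$, which forces $d(\alpha) = 0$.

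The main point, and the only real obstacle, is establishing $f'(\alpha) \neq 0$, i.e. that the minimal polynomial is separable. This is exactly where the hypothesis that $F$ is a prime field enters: both $\mathbb{Q}$ and $\mathbb{F}_p$ are perfect, so every algebraic extension $K/F$ is separable and no irreducible $f$ can satisfy $f' = 0$. In characteristic $p$ one can also see this directly: if $f' = 0$ then $f(x) = \sum a_i x^{pi}$, and since $a_i = a_i^p$ in $\mathbb{F}_p$ the Frobenius identity gives $f(x) = \bigl(\sum a_i x^i\bigr)^p$, contradicting irreducibility. Hence $f'(\alpha) \neq 0$, so $d(\alpha) = 0$; as $\alpha$ was arbitrary this yields $d(K) = \{0\}$ and, with the first paragraph, completes the argument.
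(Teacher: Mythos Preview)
Your proof is correct and follows essentially the same approach as the paper: kill $d$ on the prime field by additivity, then differentiate the minimal polynomial of an arbitrary $\alpha\in K$ to obtain $f'(\alpha)\,d(\alpha)=0$, and conclude by invertibility of $f'(\alpha)$ in $K$. The one place you are actually more careful than the paper is in justifying $f'(\alpha)\neq 0$: the paper simply says this would contradict minimality of $m_\alpha$, which tacitly assumes $f'$ is not the zero polynomial; you make this explicit by invoking that prime fields are perfect, so $f$ is separable and $f'\neq 0$.
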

\begin{proof}
Let $d \in Der(A)$. If $char(F) > 0$ then for $b\in F$, $d(b) = d(1 + 1 + \dots + 1) = d(1) + d(1) + \dots + d(1) = bd(1) = b0 = 0$, and so $d(F) = 0$. Let $F = \mathbb{Q}$ and let $a,b \in \mathbb{Z}$ with $b > 0$. 
Note that $0=d(0)=d(1-1)=d(1)+d(-1) = 0+d(-1)$, so $d(-1)=0$.
Then 
$bd(a/b) =  d(a/b) + \cdots +d(a/b) = d(a/b+ \cdots +a/b) = d(a) = \pm d(1+\cdots +1) = \pm (d(1)+\cdots +d(1)) = 0$. Therefore $d(a/b)=0$, so $d(F) = 0$ for all prime fields $F$.

Let $a$ be a nonzero element of $K$ and let $m_a(x) = \sum_{j=0}^{n_a}b_{aj}x^j \in F[x]$ be the minimal polynomial of $a$ over $F$. 
$a$ is a central unit in $K$ and so Equation~\ref{EqDiffRuleCommUnit} of  Lemma~\ref{LogTablesTheorem} applies. 
Note that for $b \in F$ and $\alpha \in K$ we have $d(b \alpha) = bd(\alpha)$, since $d(F) = 0$. 
Thus applying a derivation $d$ to $m_a(a) = 0$ and using Equation~\ref{EqDiffRuleCommUnit} 
\begin{gather*}
0 = d(0) = d(m_a(a)) = d(\sum_{j=0}^{n_a}b_{aj}a^j) = \sum_{j=0}^{n_a}b_{aj}d(a^j)\\ 
= \sum_{j=0}^{n_a}b_{aj} ja^{j-1}d(a) = \Big(\sum_{j=1}^{n_a}b_{aj} ja^{j-1}\Big)d(a) = q(a)d(a),
\end{gather*}
where $q$ is a polynomial in $F[x]$. Moreover, $q(a) \ne 0$ as this would contradict the minimality of the degree of $m_a(x)$. Therefore $d(a) = 0$, since $q(a)$ is invertible as it is a non zero element of the field $K$. Hence $d(K) = \{ 0 \}$.

The $K$-linearity of $d$ is immediate since $d$ is additive and if $a\in A$ and $k\in K$ then $d(ka)= d(k)a + kd(a)= 0+kd(a)$.
\end{proof}

\begin{corollary}\label{Cor:KalgGTorsion}
Let $K$ be an algebraic extension of a prime field $F$. Let $G$ be a torsion group such that
$[G : Z(G)] < \infty$, where $Z(G)$ denotes the center of $G$. 
Suppose that either char$(K) = 0$ or
that char$(K)=p>0$, and $p$ does not divide the order of $g$, for all $g \in G$. Then every derivation of $KG$ is
inner. 
\end{corollary}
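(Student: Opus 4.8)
The plan is to combine the two results already in hand. Theorem~\ref{dKZero} forces every derivation of a $K$-algebra to vanish on $K$ whenever $K$ is algebraic over a prime field, while Theorem~\ref{PolcinoNote} guarantees that $R$-derivations are inner under exactly the group-theoretic and characteristic hypotheses assumed here. The bridge between them is that their scopes are complementary: Theorem~\ref{PolcinoNote} speaks only of $R$-derivations, whereas Theorem~\ref{dKZero} shows that over such a field \emph{every} derivation is automatically a $K$-derivation. So the corollary should follow by chaining the two.

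First I would observe that $KG$ is a $K$-algebra and that $K$, being an algebraic extension of the prime field $F$, satisfies the hypotheses of Theorem~\ref{dKZero}. Hence every $d \in Der(KG)$ has $d(K) = \{0\}$; that is, $d$ is a $K$-derivation of $KG$. Next I would check that the hypotheses of Theorem~\ref{PolcinoNote} hold with $R = K$. Since $K$ is a field it has no nonzero nilpotent elements and is therefore semiprime. The group $G$ is torsion with $[G : Z(G)] < \infty$ by assumption, and the characteristic condition transfers directly: if char$(K) = 0$ we are in the first case of Theorem~\ref{PolcinoNote}, while if char$(K) = p > 0$ then $p$ is the only characteristic of $K$ and by hypothesis $p \nmid o(g)$ for all $g \in G$. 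Theorem~\ref{PolcinoNote} then yields that every $K$-derivation of $KG$ is inner.

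Combining the two steps finishes the proof: by the first step every derivation of $KG$ is a $K$-derivation, and by the second every such derivation is inner. There is no substantial obstacle—the whole content is the recognition that Theorem~\ref{dKZero} removes the "$R$-derivation" restriction built into Theorem~\ref{PolcinoNote}. The only points that genuinely need to be verified, rather than merely quoted, are that the field $K$ is semiprime and that the characteristic hypothesis of the corollary lines up with the per-characteristic condition of Theorem~\ref{PolcinoNote}; both are immediate for a field.
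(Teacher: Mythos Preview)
Your proposal is correct and follows essentially the same approach as the paper: invoke Theorem~\ref{dKZero} to conclude that every derivation of $KG$ is a $K$-derivation, observe that a field is semiprime, and then apply Theorem~\ref{PolcinoNote}. The paper's proof is just a terser version of what you wrote.
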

\begin{proof}
By Theorem~\ref{dKZero}, every derivation of $KG$ is a $K$-derivation and since every field is semiprime, Theorem~\ref{PolcinoNote} implies that every derivation of $KG$ is inner.
\end{proof}

Note that the requirement that $K$ is algebraic over $F$ is necessary in Theorem~\ref{dKZero} as the following example shows.

\begin{example}
Let $\mathbb{Q}(t)$ be a transcendental extension of the rationals (the field of rational functions of $t$). 
Since $\mathbb{Q}(t)$ is a $\mathbb{Q}$-algebra, Theorem~\ref{dKZero} implies that $d(\mathbb{Q})=\{ 0 \}$ for all derivations $d$ of $\mathbb{Q}(t)$.
However, by Proposition~5.2 of Chapter VIII in \cite{lang2002algebra}, there exists a nonzero derivation $d$ of $\mathbb{Q}(t)$, since $\mathbb{Q}(t)$ is a finitely generated extension over $\mathbb{Q}$ that is not separable algebraic.
\end{example}

\begin{theorem}\label{CompatabilityThm}
Let $G = \langle S \mid T \rangle$ be a group, where $S$ is a generating set and $T$ a set of relators. 
Let $F_S$ be the free group on $S$ and $\phi\colon F_S \to G$
the homomorphism of $F_S$ onto $G$.
Let $R$ be a commutative unital ring and $f$ a map from $S$ to $RG$. \\
Then 
\begin{enumerate}[label=(\roman*)]
    \item $f$ can be uniquely extended to a map $f^*$ from $F_S$ to $RG$ such that 
    \begin{align}\label{GenLeib}
    f^*(uv) = f^*(u)\phi(v) + \phi(u)f^*(v), \quad \text{ for all }u, v \in F_S,
    \end{align}
    \item the map $f $ from $S$ to $RG$ can be extended to an $R$-derivation of $RG$ if and only if $f^*(t) = 0$, for all $t \in T$,
    \item if $f$ can be extended to an $R$-derivation of $RG$, then this extension is unique.
\end{enumerate}
\end{theorem}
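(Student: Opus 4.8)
The plan is to view $f^*$ as a derivation of the group $F_S$ into $RG$, where $RG$ is made an $F_S$-bimodule through $\phi$ via $u\cdot x:=\phi(u)x$ and $x\cdot u:=x\phi(u)$; then \eqref{GenLeib} is exactly the assertion that $f^*$ is such a derivation, and everything follows from the product rule of Lemma~\ref{LogTablesTheorem}(i). For (i), uniqueness is quick: \eqref{GenLeib} with $u=v=1$ forces $f^*(1)=0$, applied to $ss^{-1}=1$ it forces $f^*(s^{-1})=-\phi(s^{-1})f(s)\phi(s^{-1})$, and read as a recursion on word length it then determines $f^*$ on every word. For existence the only genuine issue is well-definedness on $F_S$, which I would handle by first defining a map $\tilde f$ on the free monoid on the alphabet $S\sqcup S^{-1}$ by the explicit formula $\tilde f(x_1\cdots x_n)=\sum_{i=1}^{n}\phi(x_1\cdots x_{i-1})\,\tilde f(x_i)\,\phi(x_{i+1}\cdots x_n)$, with $\tilde f(s)=f(s)$ and $\tilde f(s^{-1}):=-\phi(s^{-1})f(s)\phi(s^{-1})$. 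On a free monoid there is nothing to check for well-definedness, and a direct computation yields the concatenation identity $\tilde f(wv)=\tilde f(w)\phi(v)+\phi(w)\tilde f(v)$. To descend $\tilde f$ to $F_S$ it then suffices to check invariance under the elementary moves that insert or delete $ss^{-1}$ or $s^{-1}s$; since $\tilde f(ss^{-1})=\tilde f(s^{-1}s)=0$ and $\phi(ss^{-1})=\phi(s^{-1}s)=1$, the concatenation identity shows such an insertion leaves values unchanged, so $\tilde f$ factors through a well-defined $f^*$ obeying \eqref{GenLeib}.

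For the forward implication of (ii), if $d$ is an $R$-derivation of $RG$ with $d(\phi(s))=f(s)$ for all $s\in S$, then $d\circ\phi$ satisfies \eqref{GenLeib} (because $d$ obeys Leibniz and $\phi$ is a homomorphism) and agrees with $f$ on $S$, so $d\circ\phi=f^*$ by the uniqueness in (i); since every relator satisfies $\phi(t)=1$, this gives $f^*(t)=d(1)=0$. The substance of the converse is to upgrade the hypothesis that $f^*$ vanishes on $T$ to the statement that $f^*$ vanishes on the whole kernel $N=\ker\phi$, the normal closure of $T$. I would prove this in three steps: $f^*(t^{-1})=0$ follows from \eqref{GenLeib} applied to $tt^{-1}=1$; $f^*(wtw^{-1})=0$ follows from a short computation using $\phi(t)=1$, $f^*(t)=0$, and the identity $f^*(ww^{-1})=f^*(1)=0$; and $f^*(n_1n_2)=f^*(n_1)\phi(n_2)+\phi(n_1)f^*(n_2)=0$ whenever $f^*(n_1)=f^*(n_2)=0$. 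As every element of $N$ is a product of conjugates of elements of $T\cup T^{-1}$, induction gives $f^*|_N=0$.

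Granting $f^*|_N=0$, whenever $\phi(w)=\phi(w')$ we may write $w=w'n$ with $n\in N$, and \eqref{GenLeib} gives $f^*(w)=f^*(w')\phi(n)+\phi(w')f^*(n)=f^*(w')$; hence $d(\phi(w)):=f^*(w)$ is a well-defined map $G\to RG$, which I would extend $R$-linearly to $RG$. Checking that $d$ is the desired $R$-derivation is then routine: $d(1)=f^*(1)=0$ together with $R$-linearity gives $d(R)=\{0\}$; \eqref{GenLeib} gives the Leibniz rule $d(gh)=d(g)h+g\,d(h)$ on group elements, which propagates $R$-bilinearly to all of $RG$ using that $R$ is central; and $d(\phi(s))=f^*(s)=f(s)$. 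Part (iii) is immediate, since any $R$-derivation extending $f$ composes with $\phi$ to satisfy \eqref{GenLeib} and to agree with $f$ on $S$, whence it equals $f^*$ on $F_S$ by (i) and so is determined on $G$ and, by $R$-linearity, on all of $RG$. I expect the two descent arguments---reduction-invariance in (i) and, more pointedly, the passage from $T$ to its normal closure in (ii)---to be the crux, since this is precisely where the relator condition is converted into well-definedness.
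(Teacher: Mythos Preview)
Your proposal is correct and follows essentially the same approach as the paper: define $f^*$ by the explicit product formula on letters, verify the concatenation identity, descend to $F_S$ by invariance under insertion/deletion of $ss^{-1}$, and for (ii) pass from $T$ to its normal closure via the three steps you list before descending to $G$ and extending $R$-linearly. Your forward direction of (ii), invoking the uniqueness in (i) to get $d\circ\phi=f^*$ directly, is slightly slicker than the paper's explicit computation, but otherwise the arguments coincide.
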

\begin{proof}
Let $f$ be a map from $S$ to $RG$.
$\phi$ is the identity map on $S$, so for $s\in S$, $\phi(s^{-1}s)=\phi(s^{-1})\phi(s)=\phi(s^{-1})s=\phi(1)=1$, so $\phi(s^{-1})=s^{-1}$. Thus $\phi$ is the identity map on $S\cup S^{-1}$.
\\
\textit{(i) }
We wish to extend $f$ to $f^* \colon F_S \to RG$, which satisfies Equation~\ref{GenLeib}. 

Define $f^* \colon  F_S \to RG$ as follows:
\begin{equation}\label{DefinitionOff*2}
f^*(w_i) =
\begin{cases}
f(w_i) & \text{ if } w_i \in S, \\
-w_if(w_i^{-1})w_i & \text{ if } w_i \in S^{-1},\\
0 & \text{ if } w_i = 1
\end{cases}
\end{equation}
and letting $ w = \prod_{i = 1}^k w_i$, where $w_i \in S \cup S^{-1}$, define
\begin{equation}\label{DefinitionOff*1}
f^*(w) = \sum_{i = 1}^{k}\Big( (\prod_{j = 1}^{i-1}w_j) f^*(w_i) (\prod_{j = i+1}^{k}w_j)\Big).
\end{equation}

Let $0 \le l \le k$ and $u = \prod_{i = 1}^l w_i$ and $v = \prod_{i = l + 1}^k w_i$. Then by Equations~\ref{DefinitionOff*2} and \ref{DefinitionOff*1}
\[f^*(uv) = \sum_{i = 1}^{k}\Big( ( \prod_{j = 1}^{i-1}w_j) f^*(w_i) (\prod_{j = i+1}^{k}w_j)\Big) \]
\[=\Big(\sum_{i = 1}^{l} (\prod_{j = 1}^{i-1}w_j) f^*(w_i) (\prod_{j = i+1}^{l}w_j)\Big) \prod_{j = l+1}^{k}w_j + 
\prod_{j = 1}^{l}w_j \sum_{i = l+1}^{k} (\prod_{j = l+1}^{i-1}w_j) f^*(w_i) (\prod_{j = i+1}^{k}w_j) \]
\[ =f^*(u) \prod_{j = l+1}^{k}\phi(w_j) + \prod_{j = 1}^{l}\phi(w_j) f^*(v) \]
\[= f^*(u)\phi(v) + \phi(u)f^*(v).\]
Therefore $f^*$ defined by Equations~\ref{DefinitionOff*2} and \ref{DefinitionOff*1} satisfies Equation~\ref{GenLeib}. 

If $w$ is a word on $S$, denote the reduced word by $\overline{w}$.  
In order for $f^*$ to be well defined on $F_S$ we need to show that $f^*(w) = f^*(\overline{w})$ for all words $w$ on $S$.
Let $u, v$ be words on $S$ and let $a \in S$.

Then by Equation~\ref{DefinitionOff*2}, $f^*(a)a^{-1} + af^*(a^{-1}) = f(a)a^{-1} - aa^{-1}f(a)a^{-1} = 0$. 
Similarly, $f^*(a)a^{-1} + af^*(a^{-1}) = 0$ for all $a \in S^{-1}$. Let $a \in S \cup S^{-1}$. Then by Equation \ref{DefinitionOff*1}, $f^*(aa^{-1}) = 0$ and so by Equation~\ref{GenLeib}
\[ f^*(uaa^{-1}v) = 
f^*(u)\phi(aa^{-1}v) + \phi(u)f^*(aa^{-1}v) \] 
\[ = f^*(u)\phi(v) + \phi(u)f^*(aa^{-1})\phi(v) + \phi(uaa^{-1})f^*(v) = f^*(u)\phi(v) + \phi(u)f^*(v) = f^*(uv).\]
Therefore $f^*(w) = f^*(\overline{w})$ for all words $w$ on $S$.
We now prove the uniqueness of $f^*$.

Assume that there exists a map $f_*\colon F_S \to RG$, distinct from $f^*$ which is also an extension of $f$ and which also satisfies Equation~\ref{GenLeib}. 
Let $1$ be the identity element of $F_S$. 
Then $f_*(1) = f_*(1(1)) = f_*(1)1 + 1f_*(1)$, which implies that $f_*(1) = 0 = f^*(1)$.
Let $s \in S$. 
Then $f_*(s) = f(s) = f^*(s)$ and $0 = f_*(s^{-1}s) = f_*(s^{-1})s + s^{-1}f_*(s)$. 
This implies that $f_*(s^{-1}) = -s^{-1}f_*(s)s^{-1} = f^*(s^{-1})$.
Therefore there exists an element $x$ of $F_S$, of positive length $c > 1$, such that $f^*(x) \ne f_*(x)$ and $f^*(z) = f_*(z)$ for all words $z$ in $F_S$ of length less than $c$.
Write $x = \prod_{i=1}^{c} x_i$, where $x_i \in S \cup S^{-1}$.
Thus $f^*(\prod_{i=1}^{c-1} x_i) = f_*(\prod_{i=1}^{c-1} x_i)$ and $f^*(x_c) = f_*(x_c)$, since $\prod_{i=1}^{c-1} x_i$ and $x_c$ are both elements of $F_S$ whose length is less than $c$. 
Therefore by Equation~\ref{GenLeib}
\begin{equation*}
    f_*(x) = f_*(\prod_{i=1}^{c-1} x_i)\phi(x_c) + \phi(\prod_{i=1}^{c-1} x_i)f_*(x_c)= f^*(\prod_{i=1}^{c-1} x_i)\phi(x_c) + \phi(\prod_{i=1}^{c-1} x_i)f^*(x_c) = f^*(x).
\end{equation*}
This contradiction implies that $f^*$ is the unique extension of $f$ to $F_S$, such that $f^*(uv) = f^*(u)\phi(v) + \phi(u)f^*(v)$, for all $u, v \in F_S$.  This proves \emph{(i)}.

\textit{(ii) } Considering $S$ as a subset of $G$, suppose that the map $f \colon S \to RG $ can be extended to an $R$-derivation $d$ of $RG$.
Then for any $s \in S$, $d(s) = f(s)$ and $0 = d(s^{-1}s) = d(s^{-1})s + s^{-1}d(s)$ and so  $d(s^{-1}) = -s^{-1}d(s)s^{-1} = -s^{-1}f(s)s^{-1}$. 
Therefore $d(a)= f^*(a)$, for all $a\in S\cup S^{-1}$ by Equation~\ref{DefinitionOff*2}.
Let $t = \prod_{i=1}^{m}t_i \in T$, where $t_i \in S \cup S^{-1}$ for $i = 1, 2, \dots ,m$. Then by Equations~\ref{DefinitionOff*1} and \ref{EqDiffRuleNonComm2}
\[f^*(t) = \sum_{i = 1}^{m} \Big((\prod_{j = 1}^{i-1}t_j) f^*(t_i) (\prod_{j = i+1}^{m}t_j)\Big) = \sum_{i = 1}^{m} \Big((\prod_{j = 1}^{i-1}t_j) d(t_i) (\prod_{j = i+1}^{m}t_j)\Big)
= d(t) = 0.\]
This proves the implication in \emph{(ii)}.


Conversely, assume $f^*(t) = 0$, for all $t \in T$. 
Let $t \in T$.
Then $\phi(t) = 1$ and $f^*(t^{-1}) = 0$, since $0 = f^*(tt^{-1}) = f^*(t)\phi(t^{-1}) + \phi(t)f^*(t^{-1}) = 0(1) + (1)f^*(t^{-1}) = f^*(t^{-1})$.
Let $\epsilon \in \{1,\ -1\}$.
Then for all $w \in F_S$
\begin{equation}\label{ImageOfConjEqualsZero3}
\begin{gathered}
f^*(w^{-1}t^{\epsilon}w) = f^*(w^{-1})\phi(t^{\epsilon}w) + \phi(w^{-1})f^*(t^{\epsilon}w)  \\
= f^*(w^{-1})\phi(t^{\epsilon}w) + \phi(w^{-1})f^*(t^{\epsilon})\phi(w) + \phi(w^{-1}t^{\epsilon})f^*(w) \\
= f^*(w^{-1})\phi(w) + \phi(w^{-1})f^*(w) = f^*(w^{-1}w) = 0.
\end{gathered}
\end{equation}
Let $N = \langle T ^{F_S} \rangle$ be the normal closure of $T$. Any non-identity element $n$ of $N$ can be written as $\prod_{i = 1}^{k} w_i^{-1} t_i^{\epsilon_i} w_i$, where $w_i \in F_S$,  $t_i \in T$ and $\epsilon_i \in \{-1, \ 1\}$. 
Therefore by Equations~\ref{DefinitionOff*1} and \ref{ImageOfConjEqualsZero3}
\[f^*(n) = f^*\big(\prod_{i = 1}^{k} w_i^{-1} t_i^{\epsilon_i} w_i\big) = 
\sum_{i = 1}^{k} \phi \Big( \prod_{j = 1}^{i-1}w_j^{-1} t_j^{\epsilon_j} w_j\Big) f^*(w_i^{-1} t_i^{\epsilon_i} w_i) \phi \Big( \prod_{j = i+1}^{k}w_j^{-1} t_j^{\epsilon_j} w_j\Big) = 0.\]
Also $\phi(n) = 1$, for all $n \in N$ and so for any $w \in F_S$, $f^*(wn) = f^*(w)\phi(n) + \phi(w)f^*(n) = f^*(w)$.
Let $g, h\in G  = \langle S \mid T \rangle \simeq \displaystyle \frac{F_S}{ \ \langle T^{F_S} \rangle }$ and let $u, v$ be elements of $F_S$, such that $g = \phi(u)$ and $h = \phi(v)$.
Extend $f \colon S \to RG$ to $\hat{f}\colon G \to RG$ by defining $\hat{f}(g)=f^*(u)$.
Then $\hat{f}(gh) = f^*(uv) = f^*(u)\phi(v) + \phi(u)f^*(v) = \hat{f}(g)h + g\hat{f}(h)$.
Suppose $\tilde{f}$ is also an extension of $f$ distinct from $\hat{f}$ that satisfies $\tilde{f}(gh) = \tilde{f}(g)h + g\tilde{f}(h)$ for all $g, h \in G$. 
Let $l \colon G \to \mathbb{N}$ be the minimum length of an element of $G$, defined by $l(g) = min\{k  \mid g=\prod_{i = 1}^k g_i, \ g_i \in S \cup S^{-1}\}$. 
Then there exists an $x \in G$ of minimum length such that $\tilde{f}(x) \ne \hat{f}(x)$.
For all $s \in S$, $0 = \tilde{f}(ss^{-1}) = \tilde{f}(s)s^{-1} + s\tilde{f}(s^{-1})$ and $\tilde{f}(s) = \hat{f}(s)$. 
Thus $\tilde{f}(s^{-1})  = -s^{-1} \hat{f}(s)s^{-1} = -s^{-1} f(s)s^{-1} = f^*(s^{-1}) = \hat{f}(s^{-1})$.
Therefore $\tilde{f}(g) = \hat{f}(g)$ for all $g \in G$ such that $l(g) < 2$ and so $x$ can be written as $x = yz$, where $y, z \in G$ such that $l(y) < l(x)$ and $l(z) < l(x)$. 
Then $\tilde{f}(x) = \tilde{f}(yz) = \tilde{f}(y)z + y\tilde{f}(z) = \hat{f}(y)z + y\hat{f}(z) = \hat{f}(x)$.
This contradiction implies that $\hat{f}$ is the unique extension of $f$ such that $\hat{f}(gh) = \hat{f}(g)h + g\hat{f}(h)$ for any $g, h \in G$. 
Extend $\hat{f}$, $R$-linearly to $RG$ and denote this unique extension also by $\hat{f}$. 
Let $\alpha = \displaystyle\sum_{g \in G} a_g g$ and $\beta =\displaystyle\sum_{h \in G} b_h h$ be elements of $RG$, where $a_g, b_h\in R$.
Then $\hat{f}(\alpha + \beta) = \hat{f}(\alpha) + \hat{f}(\beta)$ as $\hat{f}$ is an $R$-linear map. Moreover
\begin{gather*}
\hat{f}(\alpha) \beta + \alpha \hat{f}(\beta) = \Big( \sum_{g \in G}a_g \hat{f}(g) \Big) \Big( \sum_{h \in G}b_h h \Big) + \Big( \sum_{g \in G} a_g g \Big) \Big( \sum_{h \in G} b_h \hat{f}(h) \Big) \\
= \sum_{g, h} a_g b_h \hat{f}(g) h  + \sum_{g, h} a_g b_h g \hat{f}(h) = 
\sum_{g, h} a_g b_h (\hat{f}(g) h  + g \hat{f}(h))  = \sum_{g, h} a_g b_h \hat{f}(gh) \\
= \hat{f}\Big(\sum_{g, h} a_g b_h gh \Big) = \hat{f}\Big(\sum_{g} a_g g \sum_{h} b_h h\Big) = \hat{f}(\alpha \beta).
\end{gather*}
Therefore the map $\hat{f}$ obeys Leibniz's rule for all products of elements of $RG$ and so is an $R$-derivation of $RG$. This proves \emph{(ii)} and \emph{(iii)}.
\end{proof}

\begin{corollary}\label{CompatabilityCor}
Let $G = \langle S \mid T \rangle$ be a group, where $S$ is a generating set and $T$ a set of relators. 
Let $F_S$ be the free group on $S$ and $\phi\colon F_S \to G$
the homomorphism of $F_S$ onto $G$.
Let $K$ be an algebraic extension of a prime field and $f$ a map from $S$ to $KG$. 
Then 
\begin{enumerate}[label=(\roman*)]
    \item $f$ can be uniquely extended to a map $f^*$ from $F_S$ to $KG$ that satisfies Equation~\ref{GenLeib},
    \item $f$ can be extended to a derivation of $KG$ if and only if $f^*(t) = 0$, for all $t \in T$,
    \item if $f$ can be extended to a derivation of $KG$, then this extension is unique.
\end{enumerate}
\end{corollary}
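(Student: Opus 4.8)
The plan is to obtain this corollary directly from Theorem~\ref{CompatabilityThm} by specializing the coefficient ring $R$ to $K$, and then to bridge the gap between the notions of "$K$-derivation" and "derivation" by invoking Theorem~\ref{dKZero}. Since $K$ is a field it is in particular a commutative unital ring, so Theorem~\ref{CompatabilityThm} applies verbatim with $R = K$. Part (i) of the corollary is then literally part (i) of the theorem: its statement concerns only the existence and uniqueness of an extension $f^*$ satisfying Equation~\ref{GenLeib}, and makes no reference to derivations at all, so nothing beyond a citation is needed there.

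The only substantive point is that Theorem~\ref{CompatabilityThm} characterizes when $f$ extends to a $K$-\emph{derivation} (a derivation vanishing on $K$), whereas the corollary speaks of an arbitrary \emph{derivation} of $KG$. First I would observe that $KG$ is a $K$-algebra and that $K$, by hypothesis, is an algebraic extension of a prime field. Hence Theorem~\ref{dKZero} applies with $A = KG$ and shows that every $d \in \mathrm{Der}(KG)$ satisfies $d(K) = \{0\}$ and is $K$-linear; in other words, every derivation of $KG$ is automatically a $K$-derivation in this setting. Consequently the two classes of maps coincide, and the phrase "extends to a derivation of $KG$" may be replaced throughout by "extends to a $K$-derivation of $KG$" without loss of generality.

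With this identification in hand, parts (ii) and (iii) follow at once: the biconditional that $f$ extends if and only if $f^*(t) = 0$ for all $t \in T$, together with the uniqueness of the extension, are exactly the conclusions of Theorem~\ref{CompatabilityThm}(ii) and (iii) read with $R = K$. I do not expect a genuine obstacle here, since all the content is inherited from the two earlier results; the only thing to get right is the logical order, namely applying Theorem~\ref{dKZero} \emph{before} appealing to Theorem~\ref{CompatabilityThm}, so that the hypothesis "$R$-derivation" appearing there is known in advance to capture every derivation of $KG$.
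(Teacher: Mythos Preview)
Your proposal is correct and matches the paper's proof essentially verbatim: the paper simply invokes Theorem~\ref{dKZero} to conclude that every derivation of $KG$ is a $K$-derivation, and then cites Theorem~\ref{CompatabilityThm}. Your more detailed justification of why part~(i) needs no bridging and of the logical order is fine but not strictly necessary.
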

\begin{proof}
By Theorem~\ref{dKZero} all derivations of $KG$ are $K$-derivations and so the result follows from Theorem~\ref{CompatabilityThm}.
\end{proof}

\begin{remark}
The restriction that $R$ be a commutative ring in Theorem~\ref{CompatabilityThm} is necessary. 
To demonstrate this, let $r_1, r_2$ be noncommuting elements in a ring $R$ and let $G$ be the infinite cyclic group generated by $S = \{s\}$, that is the free group on $S$.
Let $f \colon S \to RG$ be the map defined by $s \mapsto r_1$ and extend $f$ to a map $f^* \colon G \to RG$ as in Theorem~\ref{CompatabilityThm}~\emph{(i)}. 
Assume that $f$ can be extended to an $R$-derivation $d$ of $RG$.
Then
\[d(s)r_2s + sd(r_2s) = r_1r_2s + sr_2d(s) = r_1r_2s + sr_2r_1 = (r_1r_2 + r_2r_1)s.\]
However 
\[d(sr_2s) =  r_2d(s^2) = r_2(r_1s + sr_1) = 2r_2r_1s.\]
Therefore the Leibniz rule does not apply since $d(sr_2s) \ne d(s)r_2s + sd(r_2s)$. This contradicts the assumption that $f$ can be extended to an $R$-derivation of $RG$.
\end{remark}

\section{Applications}\label{sectApplications}
We will now apply the results of the previous sections to finite commutative group algebras in Section~\ref{subsectCommutative} and then to finite dihedral group algebras in Section~\ref{subsectDihedral}.
The study of finite group algebras is motivated in part by applications to coding theory which appear in Section~\ref{subsectCoding}, where the extended binary Golay $[24,12,8]$ code and the extended binary quadratic residue $[48,24,12]$ code are presented as images of derivations of group algebras.

\subsection{Derivations of Commutative Group Algebras}\label{subsectCommutative}
The next result directs our study of derivations of commutative group algebras to the nonsemisimple case.
\begin{theorem}\label{deronpregular}
Let $R$ is a commutative unital ring. 
Let $H$ be a torsion central subgroup of a group $G$, where the order of $h$ is invertible in $R$, for all $h \in H$.
Then $d(R)=\{ 0\}$ if and only if $d(RH)=\{ 0\}$, for all $d\in Der(RG)$. 
\end{theorem}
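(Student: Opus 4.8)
The plan is to prove the two implications separately; the forward direction is essentially immediate, while the reverse direction is where all the hypotheses on $H$ are used.

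First I would dispose of the implication $d(RH)=\{0\} \Rightarrow d(R)=\{0\}$. Identifying $R$ with $R\cdot 1_G$ inside $RG$, we have $R \subseteq RH$, so any derivation that vanishes on all of $RH$ automatically vanishes on $R$. This direction requires neither the torsion nor the invertibility hypothesis.

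For the converse, suppose $d(R)=\{0\}$. The first reduction is to pass from $RH$ to the group elements of $H$. Since $R$ is commutative and central in $RG$, for any $h \in H$ and $a \in R$ we have $d(ah) = d(a)h + a\,d(h) = a\,d(h)$, using $d(a)=0$. Combined with the additivity of $d$, this shows that $d$ restricted to $RH$ is determined by its values on $H$, so it suffices to prove $d(h)=0$ for every $h \in H$.

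The heart of the argument is then a single-element computation. Fix $h \in H$ of order $n$, where $n$ is invertible in $R$. Because $H$ is central, $h$ commutes with every element of $RG$, in particular with $d(h)$, so Lemma~\ref{LogTablesTheorem}(iv) applies and gives $d(h^k)=kh^{k-1}d(h)$. Setting $k=n$ and using $h^n=1$, so that $d(h^n)=d(1)=0$, yields $n\,h^{n-1}d(h)=0$. (Equivalently one could invoke Lemma~\ref{LogTablesTheorem}(iii), since centrality collapses the sum $\sum_{i=0}^{n-1}h^i d(h) h^{n-1-i}$ to $n\,h^{n-1}d(h)$.) Since $n$ is invertible in $R$ and $h^{n-1}=h^{-1}$ is a unit, multiplying by $n^{-1}h$ gives $d(h)=0$, as required. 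The step I expect to be the crux, rather than a genuine obstacle, is the cancellation of the factor $n$: this is exactly where invertibility of the orders of elements of $H$ is essential, while the centrality hypothesis is what licenses the clean formula $d(h^k)=kh^{k-1}d(h)$ in the first place. The proof is thus a matter of assembling these two facts.
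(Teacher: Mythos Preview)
Your proposal is correct and follows essentially the same route as the paper: show $d(h)=0$ for each $h\in H$ via the power rule $d(h^{n})=nh^{n-1}d(h)$ (valid because $h$ is central) together with invertibility of $n$, then extend to all of $RH$ by additivity and $d(R)=\{0\}$; the converse is immediate since $R\subseteq RH$. The only cosmetic difference is that the paper cites part~(v) of Lemma~\ref{LogTablesTheorem} rather than part~(iv), and presents the two steps in the opposite order.
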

\begin{proof}
Let $d$ be any element of $Der(RG)$. 
Assume that $d(R)=\{ 0\}$.
Let $h$ be an element of $H$ of order $s$. 
Applying $d$ to $h^s=1$ implies $sh^{s-1}d(h)=0$ by Equation~\ref{EqDiffRuleCommUnit} of Lemma~\ref{LogTablesTheorem}.
By assumption $s$ is invertible in $R$ and so $s$ is also invertible in $RG$. Therefore $d(h) = 0$ for any $d \in Der(RG)$. 
Let $\alpha = \displaystyle \sum_{h \in H} a_h h$ be any element of $RH$. 
Then
\[ d(\alpha) = d( \sum_{h \in H} a_h h ) = \sum_{h \in H} d(a_h h) = \sum_{h \in H} a_h d(h) = \sum_{h \in H} a_h(0) = 0,\] 
by Leibniz's rule since $d(R)=\{ 0\} $ and so $d(RH)=\{ 0 \} $. The converse is immediate.
\end{proof}

\begin{corollary}\label{Zd}
    \begin{enumerate}[label=(\roman*)]
        \item Let $G$ be a finite abelian group and $F$ either the rational numbers or an algebraic extension of the rationals. Then $FG$ has no nonzero derivations.
        \item Let $H$ be a $p$-regular subgroup of a finite abelian group $G$ and $F=\f_{p^n}$. Then all derivations of $FG$ are $FH$-derivations.
    \end{enumerate}
\end{corollary}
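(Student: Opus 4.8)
The plan is to obtain both parts as direct specializations of two results already in hand: Theorem~\ref{dKZero}, which forces every derivation of a $K$-algebra to vanish on the coefficient field when $K$ is algebraic over its prime field, and Theorem~\ref{deronpregular}, which propagates $d(R)=\{0\}$ to $d(RH)=\{0\}$ whenever $H$ is a torsion central subgroup of $G$ all of whose element orders are invertible in $R$. In each part I would first invoke Theorem~\ref{dKZero} to secure the hypothesis $d(R)=\{0\}$ of Theorem~\ref{deronpregular} \emph{for free}, and then check that the relevant subgroup satisfies the central, torsion, and invertible-order conditions.

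For part~(i), note that $F$ is either the prime field $\mathbb{Q}$ or an algebraic extension of $\mathbb{Q}$; in both cases $F$ is algebraic over the prime field $\mathbb{Q}$, so Theorem~\ref{dKZero} applies to the $F$-algebra $FG$ and gives $d(F)=\{0\}$ for every $d\in Der(FG)$. Since $G$ is finite it is torsion, and since $G$ is abelian it is its own centre, so $G$ is a torsion central subgroup of itself; moreover $F$ has characteristic zero, so the order of every element of $G$ is invertible in $F$. Applying Theorem~\ref{deronpregular} with $R=F$ and $H=G$ then upgrades $d(F)=\{0\}$ to $d(FG)=\{0\}$, whence $FG$ has no nonzero derivations. (Alternatively, one could cite Corollary~\ref{Cor:KalgGTorsion} to see that every derivation is inner and then use commutativity of $FG$ to conclude that every inner derivation is zero; I prefer the route through Theorem~\ref{deronpregular} as it is self-contained and mirrors part~(ii).)

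Part~(ii) follows the identical pattern with $R=\f_{p^n}$. The finite field $\f_{p^n}$ is algebraic over its prime field $\f_p$, so Theorem~\ref{dKZero} again yields $d(\f_{p^n})=\{0\}$ for every $d\in Der(FG)$. The essential observation is that a $p$-regular subgroup $H$ consists precisely of elements whose order is coprime to $p$, and these are exactly the orders that are units in $\f_{p^n}$. As $G$ is finite abelian, $H$ is a torsion central subgroup, so Theorem~\ref{deronpregular} applied with $R=\f_{p^n}$ and this $H$ promotes $d(F)=\{0\}$ to $d(FH)=\{0\}$ for all $d\in Der(FG)$; that is, every derivation of $FG$ is an $FH$-derivation.

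Because the substantive content is already packaged in the two cited theorems, these proofs are short, and I expect the only point requiring care to be the verification of the invertible-order hypothesis of Theorem~\ref{deronpregular}. In part~(i) this is immediate from characteristic zero, whereas in part~(ii) it rests on unwinding the definition of \emph{$p$-regular} so that each element order, being coprime to $p$, is genuinely a unit of $\f_{p^n}$; this is the one step where I would be explicit, after which both conclusions are immediate.
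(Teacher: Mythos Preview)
Your proof is correct and follows essentially the same approach as the paper: invoke Theorem~\ref{dKZero} to obtain $d(F)=\{0\}$, verify the torsion/central/invertible-order hypotheses, and then apply Theorem~\ref{deronpregular} with $H=G$ in part~(i) and with the given $p$-regular $H$ in part~(ii). The paper's write-up is simply more terse, treating both parts simultaneously in a few lines.
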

\begin{proof}
For part \emph{(i)} let $H = G$.
In both cases $F$ is a commutative unital ring and $H$ is a torsion central subgroup of $G$, where the order of $h$ is invertible in $F$ for all $h \in H$. 
Also $d(F)=\{ 0\} $ for all $d\in Der(FG)$, by Theorem~\ref{dKZero}.
Therefore the results follow from Theorem~\ref{deronpregular}.
\end{proof}
Note that \emph{(i)} of this Corollary also follows from Theorem~\ref{PolcinoNote}.

\begin{remark}
In Theorem \ref{deronpregular}, the requirement that the subgroup $H$ is central is necessary. For example, there are 26 non zero derivations of $\f_3 D_8$. Moreover the 27 derivations of $\f_3 D_8$ are inner by Theorem~\ref{PolcinoNote} or Corollary~\ref{Cor:KalgGTorsion}. 
\end{remark}

In Theorem~\ref{ThmDerFiniteAbelG} we determine all derivations of finite commutative group algebras of positive characteristic $p$. 

\begin{theorem}\label{ThmDerFiniteAbelG}
Let $K$ be a finite field of positive characteristic $p$. 
Let $G \simeq H \times X$ be a finite abelian group, where $H$ is a $p$-regular group and $X$ is a $p$-group with the following presentation
\[X = \langle x_1, \dots, x_n \mid x_k^{p^{m_k}} = 1,\ [x_k,x_l]=1,\text{ for all } k, l \in \{1, 2, \dots n\} \rangle, \]
where $n, m_k \in \gls{N}$.
For $ i,j \in \{ 1, \dots, n\} $, let 
$f_i \colon  \{ x_1, \ldots , x_n \} \to KG$ be defined by \\
$f_i (x_j) = \begin{cases} 1 & if \ i = j \ and \\ 0 & otherwise. \end{cases}$ \\
Then $f_i$ can be uniquely extended to a derivation of $KG$ denoted by $\partial_i$.   
Moreover $Der(KG)$ is a vector space over $K$ with basis $\{ g\partial_i \mid g\in G, i=1,\ldots ,n \} $.
\end{theorem}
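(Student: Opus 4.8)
The plan is to prove the two assertions separately, relying on the compatibility criterion of Corollary~\ref{CompatabilityCor} together with the reduction in Corollary~\ref{Zd}.

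For the first assertion (existence and uniqueness of $\partial_i$), I would fix a generating set $S = S_H \cup \{x_1, \dots, x_n\}$ for $G$, where $S_H$ generates $H$, and extend each $f_i$ to $S$ by declaring $f_i$ to vanish on $S_H$. Since $K$ is a finite field it is an algebraic extension of $\f_p$, so Corollary~\ref{CompatabilityCor} applies and it suffices to verify that $f_i^*(t) = 0$ for every defining relator $t$ of $G$. There are four families of relators to check: the relators of $H$; the power relators $x_k^{p^{m_k}}$; the commutators $[x_k, x_l]$; and the commutators $[h, x_k]$ for $h \in S_H$. For relators involving only $H$-generators, the formula Equation~\ref{DefinitionOff*1} gives $f_i^*(t) = 0$ immediately, since every factor $f_i^*(s)$ with $s \in S_H \cup S_H^{-1}$ vanishes. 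For the power relator, Equation~\ref{DefinitionOff*1} yields $f_i^*(x_k^{p^{m_k}}) = \sum_{s=0}^{p^{m_k}-1} x_k^s f_i^*(x_k) x_k^{p^{m_k}-1-s}$, which is $0$ when $i \neq k$ and equals $p^{m_k} x_k^{p^{m_k}-1}$ when $i = k$; the latter vanishes because $\mathrm{char}(K) = p$ divides $p^{m_k}$. The two commutator families vanish by a short direct expansion using $f_i^*(x_k^{-1}) = -x_k^{-1} f_i(x_k) x_k^{-1}$ and the commutativity of $KG$, the positive and negative contributions cancelling in pairs. Uniqueness of $\partial_i$ is then exactly Corollary~\ref{CompatabilityCor}~(iii).

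For the second assertion I would first record that $Der(KG)$ is a $K$-vector space by Lemma~\ref{ZRModule} with $Z = K$, and that each $g\partial_i$ is again a derivation: since $G$ is abelian, $KG$ is commutative, so $g$ is central and the action of Definition~\ref{defsdotd} produces a derivation (one checks $(g\cdot \partial_i)(ab) = (g\cdot\partial_i)(a)b + a(g\cdot\partial_i)(b)$ using centrality of $g$). To prove spanning, take an arbitrary $d \in Der(KG)$. By Corollary~\ref{Zd}~(ii), $d$ is a $KH$-derivation, so $d$ vanishes on $H$; hence $d$ is determined by the elements $d(x_j) \in KG$. Writing $d(x_j) = \sum_{g \in G} \lambda_{g,j}\, g$ with $\lambda_{g,j} \in K$ and noting $(g\partial_i)(x_j) = \delta_{ij}\, g$, the derivation $\sum_{g,i} \lambda_{g,i}\, g\partial_i$ agrees with $d$ on every generator $x_j$ and on $H$; since a derivation is determined by its values on the generators (Corollary~\ref{CompatabilityCor}~(iii)), it equals $d$. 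For linear independence, suppose $\sum_{g,i}\lambda_{g,i}\, g\partial_i$ is the zero derivation; evaluating at $x_j$ gives $\sum_g \lambda_{g,j}\, g = 0$ in $KG$, and since $G$ is a $K$-basis of $KG$ all the $\lambda_{g,j}$ vanish.

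The step I expect to require the most care is the verification that every relator lies in the kernel of $f_i^*$, and in particular the power relator computation, where the whole construction hinges on $p \mid p^{m_k}$ in characteristic $p$; this is also the place where the hypothesis that $X$ is a $p$-group (rather than an arbitrary abelian group) is used. The commutator cancellations are routine once commutativity of $KG$ is invoked, but I would track them carefully, as the same cancellation underlies the more general fact that in the commutative setting the images $d(x_i)$ may be prescribed arbitrarily — which is precisely what makes the dimension count $|G|\cdot n$ come out, so that the family $\{g\partial_i\}$ is a genuine basis rather than merely a spanning set.
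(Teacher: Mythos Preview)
Your proposal is correct and follows essentially the same route as the paper's proof: both invoke Corollary~\ref{Zd}(ii) to reduce to $KH$-derivations, verify the relator conditions of the compatibility theorem (the power relator vanishing because $p\mid p^{m_k}$ in characteristic $p$, the commutator relators by a direct cancellation in the commutative algebra $KG$), and then establish that $\{g\partial_i\}$ is a basis by evaluating at the generators $x_j$. The only cosmetic difference is that the paper takes $S=\{x_1,\dots,x_n\}$ and implicitly works over the commutative base ring $KH$ (so only the relators of $X$ need checking), whereas you enlarge $S$ to include generators of $H$, set $f_i$ to vanish there, and dispose of the extra $H$-relators and mixed commutators trivially; the substance of the argument is identical.
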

\begin{proof}
By Corollary~\ref{Zd} \emph{(ii)} all derivations of $KG$ are $KH$-derivations.
Let $S=\{ x_1 , \ldots , x_n \} $ and let $f$ be any map from $S$ to $KG$. 
By Theorem~\ref{CompatabilityThm} $f$ can be uniquely extended to a map $f^* \colon F_S \to KG$ satisfying Equation~\ref{GenLeib}. Moreover, $f$ can be extended to a derivation of $KG$ if and only if $f^*(t) = 0$ for $t \in \{[x_k, x_l], \  x_k^{p^{m_k}} \mid k, l = 1, 2, \dots , n\}$. Let $a, b \in S$. Then 
\[f^*(a^{-1}b^{-1}ab) = f^*(a^{-1})b^{-1}ab + a^{-1}f^*(b^{-1})ab + a^{-1}b^{-1}f^*(a)b + a^{-1}b^{-1}af^*(b)\]
\[= -a^{-1}f(a)a^{-1}b^{-1}ab - a^{-1}b^{-1}f(b)b^{-1}ab + a^{-1}b^{-1}f(a)b + a^{-1}b^{-1}af(b)\]
\[= -a^{-1}f(a) - b^{-1}f(b) + a^{-1}f(a) + b^{-1}f(b) = 0.\]
Therefore $f^*([x_k, x_l]) = 0$, for all $k, l = 1, 2, \dots , n$.
Also by Equation~\ref{DefinitionOff*1} 
\[f^*(x_k^{p^{m_k}}) = \sum_{i = 1}^{p^{m_k}}\Big( (\prod_{j = 1}^{i-1}x_k) f^*(x_k) (\prod_{j = i+1}^{p^{m_k}}x_k)\Big) = p^{m_k} x_k^{(p^{m_k}-1)} f^*(x_k) = 0,\]
since $KG$ has characteristic $p$. 
Therefore any map $f \colon S \to KG$ can be uniquely extended to a derivation of $KG$. 
By Lemma~\ref{ZRModule} $Der(KG)$ is a vector space over $K$. 
Let $B = \{ g\partial_i \mid g\in G, i=1,\ldots ,n \}$. 
Any map $f \colon S \to KG$ can be written as $\sum_{i = 1}^{n} \sum_{g \in G} k_{i, g} g f_i$, where $k_{i, g} \in K$. The extension of $f$ to a derivation of $KG$ is $\sum_{i = 1}^{n} \sum_{g \in G} k_{i, g} g \partial_i$. Therefore any derivation of $KG$ can be written as a $K$-linear combination of the elements of $B$. 
Furthermore, if $(\sum_{i = 1}^{n} \sum_{g \in G} k_{i, g} g \partial_i )(x_j) = 0$, then $\sum_{g \in G} k_{g,j} g   = 0$, which implies $k_{g, j} = 0$ for all $g \in G$.
Therefore the elements of $B$ are $K$-linearly independent and so form a basis of $Der(KG)$.
\end{proof}

\begin{remark}
Derivations of finite commutative group algebras $\fpg$ are either the zero derivation (in the semisimple case by Corollary~\ref{Zd}(ii)) or can be decomposed as in Theorem ~\ref{ThmDerFiniteAbelG} as the sum of derivations of the group algebras of the cyclic direct factors of $G$.

As we will see in the next section, derivations of noncommutative finite group algebras are more involved.
\end{remark}

\subsection{Derivations of Dihedral Group Algebras}\label{subsectDihedral}
Let $n$ be an integer greater than 2 and let $D_{2n}$ denote the dihedral group with $2n$ elements and presentation $\langle x,y \mid x^{n} = y^2 = (xy)^2 = 1 \rangle $.
This section classifies the derivations of the group algebra $\f_{2^m} D_{2n}$.

\begin{definition}
Let $RG$ be a group ring. The \emph{augmentation ideal of RG}, denoted by $\Delta(G)$, is the kernel of the homomorphism from $RG$ to $R$ defined by $\sum_{g \in G} a_g g \mapsto \sum_{g \in G} a_g$. 
\end{definition}

\begin{lemma}\label{LemmaBasisCenterF2D8}\cite[pp.113]{Passman2011}
The centre of the group algebra $KG$ has as a $K$-basis the set of all finite conjugacy class sums. \qed
\end{lemma}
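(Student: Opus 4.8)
The plan is to characterize the center of $KG$ directly and then exhibit a canonical basis for it. First I would reduce the centrality condition to group elements: since multiplication in $KG$ is $K$-bilinear, an element $\alpha = \sum_{g \in G} a_g g$ lies in $Z(KG)$ if and only if it commutes with every basis element $h \in G$, that is $h\alpha = \alpha h$, or equivalently $h^{-1}\alpha h = \alpha$ for all $h \in G$.

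Next I would rewrite this conjugation in terms of the coefficients. Computing $h^{-1}\alpha h = \sum_{g} a_g\, h^{-1}gh$ and reindexing the sum via the substitution $g \mapsto hgh^{-1}$ gives $h^{-1}\alpha h = \sum_g a_{hgh^{-1}}\, g$. Comparing coefficients, $\alpha$ is central if and only if $a_g = a_{hgh^{-1}}$ for all $g, h \in G$; that is, the coefficient map $g \mapsto a_g$ is constant on each conjugacy class of $G$.

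Then I would use the finite support of $\alpha$. If $a_g \neq 0$ for some $g$, then $a_{g'} = a_g \neq 0$ for every conjugate $g'$ of $g$, so the entire conjugacy class of $g$ lies in the finite support of $\alpha$; hence that class must itself be finite. Consequently every central $\alpha$ is a finite $K$-linear combination of the class sums $\hat{C} = \sum_{g \in C} g$ taken over finite conjugacy classes $C$. Conversely, each such class sum is central, since conjugation by any $h$ merely permutes the elements of $C$ and therefore fixes $\hat{C}$; this shows the finite class sums span $Z(KG)$.

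Finally I would establish linear independence. Distinct conjugacy classes are disjoint, so distinct finite class sums are supported on disjoint subsets of $G$; since the elements of $G$ form a $K$-basis of $KG$, no nontrivial $K$-linear combination of distinct class sums can vanish. The finite class sums are therefore $K$-linearly independent, and together with the spanning property this makes them a $K$-basis of $Z(KG)$. The only step requiring genuine care is the finiteness argument: for an infinite group one must combine centrality with finite support to force the supporting classes to be finite, whereas for a finite group such as $D_{2n}$ this holds automatically, and all conjugacy class sums are then admissible.
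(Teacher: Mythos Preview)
Your argument is correct and is the standard proof of this classical fact. Note, however, that the paper does not give its own proof: the lemma is stated with a citation to Passman and closed immediately with \qed, so there is nothing to compare against beyond observing that your proof is exactly the one found in the cited reference.
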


\begin{lemma}\label{basisforZKG}
If $n$ is even, $Z(\f_{2^m} D_{2n})$, the center of $\f_{2^m} D_{2n}$ is a subspace of $\f_{2^m} D_{2n}$ of dimension $\frac{n}{2} + 3$ and a basis
$\{1,\ x^{\frac{n}{2}},\ x^{1} + x^{-1},\ x^{2} + x^{-2},\ \dots,\ x^{\tfrac{n}{2}-1} + x^{-\tfrac{n}{2}+1},\ y + x^{2}y + x^{4}y + \dots + x^{n-2}y,\  xy + x^{3}y + x^{5}y + \dots + x^{n-1}y\}$. 

If $n$ is odd, $Z(\f_{2^m} D_{2n})$ has dimension $\frac{n+3}{2}$ and a basis 
$\{1,\ x^{1} + x^{-1},\ x^{2} + x^{-2},\ \dots,\ x^{\tfrac{n-1}{2}} + x^{\tfrac{-n+1}{2}},\ y + xy + x^2y + \dots + x^{n-1}y\}$.
\end{lemma}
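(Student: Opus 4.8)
The plan is to reduce everything to a conjugacy-class computation via Lemma~\ref{LemmaBasisCenterF2D8}, which asserts that the conjugacy class sums of $D_{2n}$ form a $K$-basis of the center $Z(\f_{2^m}D_{2n})$. Once this is invoked, the characteristic of the field plays no role whatsoever: the class-sum basis is valid over any field, so the entire task becomes enumerating the conjugacy classes of $D_{2n}$, writing down their class sums, and counting them, then checking that the resulting list coincides with the stated bases in each parity case.

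First I would record the conjugation relations forced by the presentation $\langle x,y\mid x^n=y^2=(xy)^2=1\rangle$. From $(xy)^2=1$ and $y^2=1$ we get $yxy^{-1}=x^{-1}$, hence $yx^ky^{-1}=x^{-k}$ for all $k$. From this I would derive the two formulas that control all conjugation: for rotations, $x^j x^k x^{-j}=x^k$ and $(x^jy)x^k(x^jy)^{-1}=x^{-k}$, so the class of $x^k$ is the orbit of $k$ under $k\mapsto\pm k\pmod n$; and for reflections, the direct computations $x^j(x^ky)x^{-j}=x^{2j+k}y$ and $(x^jy)(x^ky)(x^jy)^{-1}=x^{2j-k}y$, so conjugating a reflection shifts the exponent of $x$ by an even amount modulo $n$.

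Next I would enumerate the classes, and this is where the argument forks on the parity of $n$, which I expect to be the main obstacle. Among the rotations, the orbit of $k$ under $k\mapsto\pm k$ yields the singleton $\{1\}$ and the pairs $\{x^k,x^{-k}\}$ (class sum $x^k+x^{-k}$); when $n$ is even there is the additional singleton $\{x^{n/2}\}$, since then $x^{n/2}=x^{-n/2}$ is central. Among the reflections, the formulas above show the reachable exponents from $x^ky$ are exactly $\{\,k+2j,\ 2j-k \pmod n\,\}$. The crux is to verify that this set behaves differently in the two parities: when $n$ is even, $2$ is a zero divisor modulo $n$, these exponents all stay in the residue class of $k$ modulo $2$, and the reflections split into precisely two classes $\{x^{2i}y\}$ and $\{x^{2i+1}y\}$, each of size $n/2$; when $n$ is odd, $2$ is invertible modulo $n$, so $k+2j$ runs over all residues as $j$ varies and all $n$ reflections fuse into a single class. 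Confirming that these two candidate reflection classes are genuinely distinct and exhaustive for even $n$, and that they collapse to one for odd $n$, is the only real content of the proof.

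Finally I would read off the class sums and count. For $n$ even I obtain $1$, $x^{n/2}$, the $n/2-1$ sums $x^k+x^{-k}$ with $1\le k\le n/2-1$, and the two reflection sums $y+x^2y+\dots+x^{n-2}y$ and $xy+x^3y+\dots+x^{n-1}y$, totalling $\tfrac n2+3$ elements and matching the stated basis. For $n$ odd I obtain $1$, the $(n-1)/2$ sums $x^k+x^{-k}$ with $1\le k\le (n-1)/2$, and the single reflection sum $y+xy+\dots+x^{n-1}y$, totalling $\tfrac{n+3}{2}$ elements, again as claimed. Linear independence and spanning are automatic from Lemma~\ref{LemmaBasisCenterF2D8}, so no further verification is needed once the class list is pinned down.
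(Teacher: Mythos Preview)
Your proposal is correct and follows essentially the same approach as the paper: invoke Lemma~\ref{LemmaBasisCenterF2D8} to reduce to listing the conjugacy classes of $D_{2n}$ in the two parity cases, then read off the class sums and count. The only difference is that you actually derive the conjugation formulas and justify the class enumeration, whereas the paper simply states the conjugacy classes without proof and appeals to the lemma.
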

\begin{proof}
If $n$ is even the conjugacy classes of $D_{2n}$ are as follows: $\{1\}$, $\{x^{\frac{n}{2}}\}$, $\{x^{i}, x^{-i}\}$, for $i = 1, 2, \dots, \tfrac{n}{2}-1$, $\{y, x^{2}y, x^{4}y, \dots, x^{n-2}y\}$ and $\{xy, x^{3}y, x^{5}y, \dots, x^{n-1}y\}$. 
If $n$ is odd the conjugacy classes of $D_{2n}$ are as follows: $\{1\}$, $\{x^{i}, x^{-i}\}$, for $i = 1, 2, \dots, \tfrac{n-1}{2}$ and $\{y, xy, x^2y,\dots, x^{n-1}y\}$. 
The result follows from counting the conjugacy classes and by Lemma~\ref{LemmaBasisCenterF2D8}.
\end{proof}

\begin{corollary}\label{BasisOfCy}
Let $C(y)$ and $C(xy)$ denote respectively the centralisers of $y$ and $xy$ in $\f_{2^m} D_{2n}$.
Then the following are bases for $C(y)$ and $C(xy)$. \\
Case (1): n is even
\begin{align*}
B_e(y) = \{1,\ x^{\frac{n}{2}},\ y,\ x^{\frac{n}{2}}y \} \cup \{(x^i + x^{-i}),\ (x^i + x^{-i})y \mid  i = 1, 2, \dots , \tfrac{n}{2}-1\} \\
B_e(xy) = \{1,\ x^{\frac{n}{2}},\ xy,\ x^{\frac{n}{2}}xy \} \cup \{(x^i + x^{-i}),\ x(x^i + x^{-i})y \mid  i = 1, 2, \dots , \tfrac{n}{2}-1\}.
\end{align*}

Case (2): n is odd
\begin{align*}
B_o(y) = \{1,\ y\} \cup \{(x^i + x^{-i}),\ (x^i + x^{-i})y \mid  i = 1, 2, \dots , \tfrac{n-1}{2}\} \\
B_o(xy) = \{1,\ xy\} \cup \{(x^i + x^{-i}),\ x(x^i + x^{-i})y \mid  i = 1, 2, \dots , \tfrac{n-1}{2}\}.
\end{align*}
\end{corollary}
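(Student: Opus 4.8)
The plan is to realize each centralizer as the fixed-point space of an inner automorphism and then read off a basis from the orbits of that automorphism on the standard group basis $\{x^i,\ x^iy \mid 0 \le i \le n-1\}$ of $\f_{2^m}D_{2n}$. Since $y^2 = (xy)^2 = 1$, both $y$ and $xy$ equal their own inverses, so $\alpha \in C(y)$ exactly when $\sigma_y(\alpha) = \alpha$, where $\sigma_y(\alpha) = y\alpha y^{-1} = y\alpha y$ is the algebra automorphism given by conjugation, and likewise $C(xy)$ is the fixed-point space of $\sigma_{xy}(\alpha) = (xy)\alpha(xy)$. Because each $\sigma$ is a bijective algebra homomorphism that merely permutes the group-basis elements, and since it is an involution, its two parts $\{x^i\}$ and $\{x^iy\}$ are each invariant; a scalar combination $\sum c_g g$ is fixed precisely when $c$ is constant on orbits. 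Hence the fixed subspace has as an $\f_{2^m}$-basis the set of orbit sums: a single basis element $g$ for each $1$-element orbit, and $g + \sigma(g)$ for each $2$-element orbit (the characteristic is irrelevant here).

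First I would compute the two permutation actions. From $(xy)^2 = 1$ together with $y^2 = x^n = 1$ one gets $yxy^{-1} = x^{-1}$, hence $yx^iy^{-1} = x^{-i}$ and, using $yx^i = x^{-i}y$, also $y(x^iy)y^{-1} = x^{-i}y$. Thus $\sigma_y$ acts as $x^i \mapsto x^{-i}$ and $x^iy \mapsto x^{-i}y$. For $\sigma_{xy}$ the same relation gives $(xy)x^i(xy)^{-1} = x^{-i}$, while a short computation yields $(xy)(x^iy)(xy)^{-1} = x^{2-i}y$. So on the $x$-part $\sigma_{xy}$ agrees with $\sigma_y$, while on the $x^iy$-part it sends $i \mapsto 2-i$.

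Next I would determine the orbits. For the map $i \mapsto -i \pmod n$ the fixed indices are exactly those with $2i \equiv 0 \pmod n$, namely $i = 0$ for all $n$ and additionally $i = \tfrac n2$ when $n$ is even; every other $i$ is paired with $-i$. This immediately produces the orbit sums $1$, $x^{n/2}$ (even $n$ only), and $x^i + x^{-i}$ on the $x$-part, and correspondingly $y$, $x^{n/2}y$, and $(x^i + x^{-i})y$ on the $x^iy$-part of $C(y)$, giving $B_e(y)$ and $B_o(y)$. For $C(xy)$ the $x$-part is identical, while for $i \mapsto 2-i \pmod n$ the fixed indices satisfy $2i \equiv 2 \pmod n$, so $i = 1$ always and $i = 1 + \tfrac n2$ when $n$ is even; the remaining indices pair as $\{j,\ 2-j\}$. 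Writing $j = i+1$ turns the orbit sum $x^jy + x^{2-j}y$ into $x^{i+1}y + x^{1-i}y = x(x^i + x^{-i})y$, which is exactly the form listed in $B_e(xy)$ and $B_o(xy)$.

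The only real bookkeeping, and the step most prone to error, is verifying the conjugation formula $(xy)(x^iy)(xy)^{-1} = x^{2-i}y$ and then checking that, as $i$ ranges over $\{1,\dots,\tfrac n2-1\}$ (resp. $\{1,\dots,\tfrac{n-1}{2}\}$), the reindexed sums $x(x^i + x^{-i})y$ run over each non-fixed orbit of $i \mapsto 2-i$ exactly once, so that together with the two (resp. one) fixed-index terms they account for all $\tfrac n2 + 1$ (resp. $\tfrac{n+1}{2}$) basis vectors coming from the $x^iy$-part. Counting orbits then confirms that the listed sets are linearly independent and span their respective fixed-point spaces, and hence are bases for $C(y)$ and $C(xy)$.
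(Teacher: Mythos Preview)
Your argument is correct and, for $C(y)$, is exactly the paper's proof: both identify $C(y)$ as the fixed space of conjugation by $y$ and take orbit sums as a basis. For $C(xy)$ the paper takes a slightly different shortcut: rather than recomputing orbits under $\sigma_{xy}$, it observes that the map $\alpha\colon x\mapsto x,\ y\mapsto xy$ is a group (hence algebra) automorphism of $D_{2n}$ carrying $C(y)$ bijectively onto $C(xy)$, so applying $\alpha$ to $B_e(y)$ and $B_o(y)$ immediately yields $B_e(xy)$ and $B_o(xy)$. Your direct orbit computation for $i\mapsto 2-i$ is equally valid and the bookkeeping you flag does check out; the paper's automorphism trick just avoids that reindexing step.
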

\begin{proof}
Let $g \in D_{2n}$ and denote by $Orb(g^y)$ the subset $\{g, g^y\}$ of $D_{2n}$. The set $\{Orb(g^y) \mid g \in G\}$ is a partition of $D_{2n}$. The set of elements formed by taking the partition sums forms a basis $B_e(y)$ for $C(y)$, when $n$ is even and $B_o(y)$, when $n$ is odd.
The map $\alpha \colon D_{2n} \to D_{2n}$ defined by $y \mapsto xy$ and $x \mapsto x$ is an automorphism of $D_{2n}$. 
Extend $\alpha$ $\f_{2^m}$-linearly to an $\f_{2^m}$-algebra automorphism of $\f_{2^m} D_{2n}$.

Let $c = a + by$, where $a, b \in \f_{2^m} \langle x \rangle$.
Assume that $c \in C(y)$. Then $(a + by)y = y(a + by)$, which implies that $ay = ya$ and $by = yb$ and so $a, b \in Z(\f_{2^m} D_{2n})$. Therefore $\alpha(c) \in C(xy)$, since
\[xy \alpha(c) = xy(a + bxy) = axy + bxyxy = (a + bxy)xy = \alpha(c)xy.\]
Conversely, assume $\alpha(c) = a + bxy \in C(xy)$. Then
\[a^yxy + b^y = xy(a + bxy) = (a + bxy)xy = axy + b.\]
This implies $a = a^y$ and $b = b^y$ and so $c \in C(y)$.
Therefore $c \in C(y)$ if and only if $\alpha(c) \in C(xy)$. Applying $\alpha$ to the basis $B_e(y)$ gives $B_e(xy)$ and applying $\alpha$ to $B_o(y)$ gives $B_o(xy)$.
\end{proof}

\begin{definition}\label{dNotation}
Given a derivation $d$ of $\f_{2^m}D_{2n}$, denote it by $d=d_{x', y'}$, where $x' = d(x)$ and $y' = d(y)$. Note that $d(x)$ and $d(y)$ uniquely determine this derivation.
\end{definition}

By Lemma~\ref{ZRModule}, $Der(\f_{2^m} D_{2n})$ forms a vector space over $\f_{2^m}$. The following Theorem exhibits a basis for $Der(\f_{2^m} D_{2n})$.

\begin{theorem}\label{Thm:BasisDer(fpmD2n)}
If $n$ is even, $Der(\f_{2^m} D_{2n})$ has dimension $2n + 4$ and a basis 
\[ \left\{ d_{x', y'} \mid (x',y') \in \{(\lambda y, 0),\ (x \omega y, \omega) \mid \lambda \in B_e(xy),\ \omega \in B_e(y)\} \right\} . \] 

If $n$ is odd, $Der(\f_{2^m} D_{2n})$ has dimension $\frac{3n+1}{2}$ and a basis 
\begin{flalign*}
& \big\{d_{x', y'} \mid (x',y') \in \\
& \{((x^i + x^{-i})y, 0), ((1+x)y, 1), (0, y), (x(x^i + x^{-i})y, x^i + x^{-i}), (0, (x^i + x^{-i})y) \mid i = 1, \dots , \tfrac{n-1}{2}\} \big\}. 
\end{flalign*} 
\end{theorem}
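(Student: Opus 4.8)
The plan is to apply Theorem~\ref{CompatabilityThm} with $S=\{x,y\}$ and relator set $T=\{x^n,\,y^2,\,(xy)^2\}$. By Definition~\ref{dNotation} the assignment $d\mapsto(d(x),d(y))$ is an injective $\f_{2^m}$-linear map from $Der(\f_{2^m}D_{2n})$ into $(\f_{2^m}D_{2n})^2$, and Theorem~\ref{CompatabilityThm} identifies its image with the set of pairs $(x',y')$ such that the extension $f^*$ of $f(x)=x'$, $f(y)=y'$ vanishes on each relator. So the whole problem reduces to describing this solution subspace and exhibiting a basis; since $Der(\f_{2^m}D_{2n})$ is a vector space by Lemma~\ref{ZRModule}, a linearly independent list of solutions of the right cardinality will finish it.

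First I would translate the three relator conditions into linear conditions on $x'$ and $y'$. Working in characteristic $2$ and using Equation~\ref{GenLeib}, the relator $y^2$ gives $f^*(y^2)=y'y+yy'=0$, i.e. $y'\in C(y)$, and $(xy)^2$ gives $f^*((xy)^2)=\delta(xy)+(xy)\delta$ with $\delta:=f^*(xy)=x'y+xy'$, i.e. $\delta\in C(xy)$. The relator $x^n$ is the delicate one: Equation~\ref{EqDiffRuleNonCommUnit} of Lemma~\ref{LogTablesTheorem} makes the condition $\sum_{i=0}^{n-1}x^ix'x^{n-1-i}=0$, and writing $x'=a_1+b_1y$ with $a_1,b_1\in\f_{2^m}\langle x\rangle$ and using $yx^k=x^{-k}y$ collapses this sum to $na_1x^{n-1}+b_1\big(\sum_{i=0}^{n-1}x^{2i+1}\big)y$. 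I would then split on the parity of $n$: for $n$ even both coefficients vanish (the scalar $n$ is $0$ and each odd power of $x$ occurs twice), so $x^n$ imposes nothing; for $n$ odd it forces $a_1=0$ and $b_1\in\Delta(\langle x\rangle)$, i.e. $x'=b_1y$ with $b_1$ of augmentation $0$.

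Next I would compute the dimension of the solution space by rank--nullity, reading $\dim C(y)=\dim C(xy)$ off Corollary~\ref{BasisOfCy} ($n+2$ when $n$ is even, $n+1$ when $n$ is odd). For $n$ even the linear map $(x',y')\mapsto\big(y'\bmod C(y),\,(x'y+xy')\bmod C(xy)\big)$ is surjective---the first slot is free in $y'$, and for fixed $y'$ right multiplication by the unit $y$ lets $x'y+xy'$ sweep out all of $\f_{2^m}D_{2n}$---so the kernel has dimension $4n-2(n-2)=2n+4$. For $n$ odd I would substitute $x'=b_1y$ (so $\delta=b_1+xy'$) and split $b_1+xy'\in C(xy)$ into its $\langle x\rangle$-part and its $y$-part; the $y$-part turns out automatic, while the $\langle x\rangle$-part is a single surjective constraint modulo $C(xy)\cap\f_{2^m}\langle x\rangle$ (surjective because $\Delta(\langle x\rangle)$ and this centraliser piece together span $\f_{2^m}\langle x\rangle$), giving dimension $2n-\tfrac{n-1}{2}=\tfrac{3n+1}{2}$.

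Finally I would check that every listed pair $(x',y')$ satisfies all three conditions and that the listed pairs are independent. These are short characteristic-$2$ checks against the conditions above: for instance $(\lambda y,0)$ with $\lambda\in B_e(xy)$ gives $\delta=\lambda y^2=\lambda\in C(xy)$, while $(x\omega y,\omega)$ with $\omega\in B_e(y)$ gives $\delta=x\omega+x\omega=0$; the odd-case pairs are verified the same way, using that $x^i+x^{-i}$ and $x(x^i+x^{-i})y$ lie in the bases of Corollary~\ref{BasisOfCy}. Independence follows by reading off the second coordinates (which range over the centraliser bases) first and then the first coordinates, using that right multiplication by $y$ is injective. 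Since in each parity the number of listed pairs equals the dimension computed above ($2n+4$ and $\tfrac{3n+1}{2}$ respectively), they form a basis. I expect the main obstacle to be the $x^n$ computation and, for odd $n$, correctly separating the $\langle x\rangle$- and $y$-components of $b_1+xy'\in C(xy)$ so that the surviving freedom matches the stated generators---the parity dichotomy in both the dimension and the shape of the basis is where the bookkeeping is most error-prone.
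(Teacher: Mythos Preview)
Your proposal is correct and follows essentially the same route as the paper: translate the three relator conditions via Theorem~\ref{CompatabilityThm} into $y'\in C(y)$, $x'y+xy'\in C(xy)$, and the parity-dependent condition coming from $x^n$, then count dimensions and verify the listed pairs. The only cosmetic differences are that the paper dispatches the even-$n$ relator $x^n$ using the centrality of $x^{n/2}$ rather than your direct exponent count, and it computes the dimension by directly parametrising solutions via $(\Lambda,\Omega)\in C(xy)\times C(y)$ with $f(x)=\Lambda y+x\Omega y$ (and, for odd $n$, writing everything in coordinates over $B_o(xy)$ and $B_o(y)$) instead of your rank--nullity argument; these are equivalent bookkeeping choices.
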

\begin{proof}
The relators of $D_{2n}$ are $y^2$, $(xy)^2$ and $x^n$. 
Therefore by Corollary~\ref{CompatabilityCor}, $f \colon \{x, y\} \to \f_{2^m} D_{2n}$ can be extended to a derivation of $\f_{2^m} D_{2n}$ if and only if $f^*(y^2) = f^*((xy)^2) = f^*(x^n) = 0$.
$f^*(y^2) = 0$ if and only if $f(y) \in C(y)$. 
Also $f^*((xy)^2) = 0$ if and only if $f(x)y + xf(y) \in C(xy)$, since $f^*((xy)^2) = f^*(xy)xy + xyf^*(xy)$ and $f^*(xy) = f(x)y + xf(y)$. 
We now treat the cases where $n$ is even and $n$ is odd separately.

Case (1): $n$ is even.
$f^*(x^n) = f^*(x^{\frac{n}{2}}x^{\frac{n}{2}}) = f^*(x^{\frac{n}{2}})x^{\frac{n}{2}} + x^{\frac{n}{2}}f^*(x^{\frac{n}{2}}) = 0$, for all $f(x) \in \f_{2^m} D_{2n}$, since $x^{\frac{n}{2}} \in Z(\f_{2^m} D_{2n})$.
Therefore $f \colon \{x, y\} \to \f_{2^m} D_{2n}$ can be extended to a derivation of $\f_{2^m} D_{2n}$ if and only if $f(y) \in C(y)$ and $f(x)y + xf(y) \in C(xy)$. 

Let $f(y)$ and $f^*(xy)$ be arbitrary elements of $C(y)$ and $C(xy)$, respectively. 
Write $f(y) = \Omega = \sum_{i=1}^{n+2} r_i \omega_i$ and $f^*(xy) = \Lambda = \sum_{i=1}^{n+2} k_i \lambda_i$, where $r_i, k_i \in \f_{2^m}$, $\omega_i \in B_e(y)$ and $\lambda_i \in B_e(xy)$.
Then $\Lambda = f^*(xy) = f(x)y + x \Omega$ and so $f(x) = \Lambda y + x \Omega y$. 
Therefore 
\[Der(\f_{2^m} D_{2n}) = \{d_{(\Lambda y + x \Omega y, \Omega)} \} = \{d_{(\sum k_i \lambda_i y + \sum r_i x \omega_i y,\ \sum r_i \omega_i )}\}.\]
Define $B_e = \{d_{(\lambda y, 0)},\ d_{(x \omega y, \omega)} \mid \lambda \in B_e(xy),\ \omega \in B_e(y)\}$. 
Then $B_e$ is a spanning set for $Der(\f_{2^m} D_{2n})$, since $r_1\cdot d_{(x_1,y_1)} + r_2 \cdot  d_{(x_2,y_2)} = d_{(r_1x_1 + r_2x_2, r_1y_1 + r_2y_2)}$ for $r_1, r_2 \in \f_{2^m}$ and $x_1, x_2, y_1, y_2 \in \f_{2^m} D_{2n}$.
We now show that the elements of $B_e$ are linearly independent. Assume
\[ \sum_{i=1}^{n+2} k_i d_{(\lambda_i y, 0)} + \sum_{i=1}^{n+2} r_i d_{(x \omega_i y, \omega_i)} = d_{(\sum k_i \lambda_i y + \sum r_i x \omega_i y,\ \sum r_i \omega_i )} = d_{(0,0)}\]
This implies $r_i = k_i = 0$ for $i = 1, 2, \dots, n + 2$.
Therefore $Der(\f_{2^m} D_{2n})$ has a basis $B_e = \{ d_{x', y'} \mid (x',y') \in \{(\lambda y, 0),\ (x \omega y, \omega) \mid \lambda \in B_e(xy),\ \omega \in B_e(y)\} \} $ and dimension $2n + 4$.

Case (2): $n$ is odd.
Let $f(x) = a + by$, where $a, b \in \f_{2^m} \langle x \rangle$.
Assume that $f$ can be extended to a derivation of $Der(\f_{2^m} D_{2n})$. 
So $f^*(x^n)=0$.
Applying Equation~\ref{DefinitionOff*1} gives
\[0 = \sum_{i = 1}^{n} \Big( (\prod_{j = 1}^{i-1}x) f^*(x)  (\prod_{j = i+1}^{n}x)\Big) = \sum_{t = 0}^{n-1} x^t (a + by) x^{n-1-t} = nax^{n-1} + \sum_{t = 0}^{n-1} x^{2t + 1} by . \]
Right multiplying this equation by $x$ and using $n \equiv 1$~(mod 2) and $\sum_{t = 0}^{n-1} x^{2t} = (\sum_{t = 0}^{n-1} x^{t})^2 = n\sum_{t = 0}^{n-1} x^{t}$ gives $a + \sum_{t = 0}^{n-1} x^{t} by = 0$.
This implies that $a=0$ and $b \in \Delta(\langle x \rangle)$. 
Therefore there is a third condition when $n$ is odd, namely $f(x) = by$, where $b \in \Delta(\langle x \rangle)$.

Let $f(y) = \Omega \in C(y)$ and let $f^*(xy) = \Lambda \in C(xy)$. 
Then $\Lambda = f^*(xy) = f(x)y + x \Omega$ and so $f(x) = \Lambda y + x \Omega y$. 
Therefore $Der(\f_{2^m} D_{2n}) = \{d_{(\Lambda y + x \Omega y, \Omega)} \mid \Lambda \in C(xy),\ \Omega \in C(y), \ \Lambda + x \Omega \in \Delta(\langle x \rangle)\}$. 
Write $\Lambda$ and $\Omega$ as $\f_{2^m}$-linear combinations of $B_o(xy)$ and $B_o(y)$ respectively, that is 
\begin{gather*}
\Lambda = k_1 1 + k_2 xy + \sum_{i=1}^{\frac{n-1}{2}} k_{3,i} (x^i + x^{-i}) + \sum_{i=1}^{\frac{n-1}{2}} k_{4,i} x(x^i + x^{-i})y, \\
\Omega = r_1 1 + r_2 y + \sum_{i=1}^{\frac{n-1}{2}} r_{3,i} (x^i + x^{-i}) + \sum_{i=1}^{\frac{n-1}{2}} r_{4,i} (x^i + x^{-i})y \quad \text{and so} \\
\Lambda + x \Omega = k_1 1 + r_1 x + (k_2 + r_2)xy + \sum_{i=1}^{\frac{n-1}{2}} k_{3,i} (x^i + x^{-i}) +
\sum_{i=1}^{\frac{n-1}{2}} r_{3,i} x(x^i + x^{-i}) + 
\sum_{i=1}^{\frac{n-1}{2}} (k_{4,i} + r_{4,i}) x(x^i + x^{-i})y.
\end{gather*}
Then $(\Lambda + x \Omega) \in \Delta(\langle x \rangle)$ implies that $k_1 = r_1,\ k_2 = r_2$ and $k_{4, i} = r_{4, i}$, for $i = 1, 2, \dots, \frac{n-1}{2}$. 
Therefore $Der(\f_{2^m} D_{2n}) = \{d_{(\Lambda y + x \Omega y, \Omega)} \}$, where  
\begin{gather*}
\Lambda y + x \Omega y = r_1(1+x)y + \sum_{i=1}^{\frac{n-1}{2}} k_{3,i} (x^i + x^{-i})y + \sum_{i=1}^{\frac{n-1}{2}} r_{3,i} x(x^i + x^{-i})y \\
\text{and} \quad \Omega = r_1 1 + r_2 y + \sum_{i=1}^{\frac{n-1}{2}} r_{3,i} (x^i + x^{-i}) + \sum_{i=1}^{\frac{n-1}{2}} r_{4,i} (x^i + x^{-i})y.
\end{gather*}
Define $B_o = \{d_{x', y'}\}$ where $(x',y') \in \{((1+x)y, 1), \ ((x^i + x^{-i})y, 0), \ (x(x^i + x^{-i})y, x^i + x^{-i}), \ (0, y), \ (0, (x^i + x^{-i})y) \mid i = 1, 2, \dots , \frac{n-1}{2}\}$. 
$B_o$ is a spanning set for $Der(\f_{2^m} D_{2n})$.
The elements of $B_o$ are linearly independent since 
$d_{(\Lambda y + x \Omega y, \Omega)} = d_{(0,0)}$ implies that $r_1 = r_2 = r_{3,i} = r_{4,i} = k_{3,i} = 0$, for $i = 1, 2, \dots, \frac{n-1}{2}$. 

Therefore $Der(\f_{2^m} D_{2n})$ has a basis $B_o = \{d_{x', y'}\}$ where $(x',y') \in \{((1+x)y, 1), \ ((x^i + x^{-i})y, 0), \ (x(x^i + x^{-i})y, x^i + x^{-i}), \ (0, y), \ (0, (x^i + x^{-i})y) \mid i = 1, 2, \dots , \frac{n-1}{2}\}$. Thus $Der(\f_{2^m} D_{2n})$ has dimension $3(\frac{n-1}{2})+2 = \frac{3n+1}{2}$. 
\end{proof}

\begin{lemma}\cite{Rowen}\label{Lemma:RowenDer}
Let $a, c$ be elements of a ring $R$ and let $d_c$ be the map from $R$ to $R$ defined by $d_c(a) = [a, c]=ac - ca$ for all $a\in R$. Then 
\begin{enumerate}
    \item The Lie commutator is anti-symmetric, $[a,b] = -[b,a]$.
    \item the map $d_c$ is an inner derivation for all $c \in R$.
    \item $d_c = 0$ if and only if $c \in Z(R)$.
\end{enumerate}
\end{lemma}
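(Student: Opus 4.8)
The plan is to dispatch the three claims in order, each being a direct computation from the definitions of the Lie commutator and of a derivation, together with the definition of inner derivation given earlier in the excerpt. For part (1), anti-symmetry follows immediately by expanding the bracket: $[a,b] = ab - ba = -(ba - ab) = -[b,a]$, with no further argument required.

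For part (2), the only real content is to verify that $d_c$ is in fact a derivation, after which it is inner essentially by definition, taking $b = c$ in the definition of inner derivation. Additivity is clear, since $d_c(a + a') = (a + a')c - c(a + a') = (ac - ca) + (a'c - ca') = d_c(a) + d_c(a')$, using that multiplication distributes over addition in $R$. For the Leibniz rule one checks $d_c(aa') = d_c(a)a' + a\,d_c(a')$ by expanding the right-hand side as $(ac - ca)a' + a(a'c - ca') = aca' - caa' + aa'c - aca' = aa'c - caa'$, which coincides with the left-hand side $(aa')c - c(aa') = aa'c - caa'$. Thus $d_c$ satisfies Equations~(\ref{eq1}) and~(\ref{eq2}), so it is a derivation, and since $d_c(a) = ac - ca$ for every $a \in R$ it is inner with witness $c$.

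For part (3), $d_c = 0$ means $ac - ca = 0$, that is $ac = ca$, for every $a \in R$; this says precisely that $c$ commutes with every element of $R$, which is the definition of $c \in Z(R)$, and the converse is the same statement read in the other direction. Since each step is a short manipulation straight from the definitions, there is no genuine obstacle here; the only place where one must track a cancellation carefully is the expansion of the Leibniz rule in part (2), where the cross terms $aca'$ and $-aca'$ must be seen to cancel, leaving exactly $aa'c - caa'$.
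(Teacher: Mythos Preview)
Your proof is correct; each of the three parts is a straightforward verification from the definitions, and your check of the Leibniz rule in part~(2) is accurate. Note that the paper itself gives no proof of this lemma at all --- it is stated with a citation to Rowen and left unproved --- so there is no approach to compare against; your direct argument is exactly the standard one.
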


We now give a basis for the set of inner derivations of $\f_{2^m} D_{2n}$.

\begin{theorem}\label{Thm:basisofInn}
The set of inner derivations of $\f_{2^m} D_{2n}$ is an $\f_{2^m}$-vector space with dimension $3\floor{\frac{n-1}{2}}$ and basis 
\[ \left\{d_b \mid b \in \{x^i \mid i=1, 2, \dots, \floor{\tfrac{n-1}{2}}\} \cup \{x^iy \mid i = 0, 1,\dots,2\floor{\tfrac{n-1}{2}}-1\}\right\}.\]
\end{theorem}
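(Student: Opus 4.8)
The plan is to exploit the linear structure of inner derivations rather than to analyze the relators directly. By Lemma~\ref{Lemma:RowenDer}, every inner derivation of $R := \f_{2^m} D_{2n}$ has the form $d_b$ for some $b \in R$, and the assignment $b \mapsto d_b$ is $\f_{2^m}$-linear: $d_{b+c}=d_b+d_c$, and $r\cdot d_b = d_{rb}$ for $r\in\f_{2^m}$ since scalars are central (cf.\ Definition~\ref{defsdotd}). Part~(3) of Lemma~\ref{Lemma:RowenDer} identifies its kernel as $Z(R)$, so the space of inner derivations is $\f_{2^m}$-linearly isomorphic to $R/Z(R)$ (and is a subspace of $Der(\f_{2^m} D_{2n})$, consistent with Lemma~\ref{ZRModule}).

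First I would fix the dimension from this isomorphism. Since $\dim_{\f_{2^m}} R = 2n$, the space of inner derivations has dimension $2n - \dim Z(R)$. Substituting the values from Lemma~\ref{basisforZKG}, namely $\frac{n}{2}+3$ for even $n$ and $\frac{n+3}{2}$ for odd $n$, gives $\frac{3n-6}{2}$ and $\frac{3n-3}{2}$ respectively, and one checks that both equal $3\floor{\frac{n-1}{2}}$. This is the asserted dimension, so it remains only to verify that the proposed collection of $3\floor{\frac{n-1}{2}}$ derivations $d_b$ is linearly independent; a dimension count then promotes independence to a basis.

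Because $b\mapsto d_b$ is linear with kernel $Z(R)$, a combination $\sum c_b d_b$ vanishes exactly when $\sum c_b b \in Z(R)$, so the task reduces to showing that the chosen group elements $\{x^i\}$ and $\{x^i y\}$ are linearly independent \emph{modulo} the centre. I would argue this by comparing supports against the explicit central basis of Lemma~\ref{basisforZKG}, treating the $\langle x\rangle$-part and the $y$-coset part separately (each is separately central). For the powers of $x$: the chosen exponents $i=1,\dots,\floor{\frac{n-1}{2}}$ are all ``low'', while every central element of $\f_{2^m}\langle x\rangle$ is spanned by $1$, the symmetric pairs $x^j + x^{-j}$ (whose high-power component $x^{n-j}$ is forced to appear), and, when $n$ is even, $x^{n/2}$; matching coefficients on $x^0$, on $x^{n/2}$, and on the high powers $x^{n-j}$ forces every coefficient to vanish. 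For the reflections $x^i y$: the range $i=0,\dots,2\floor{\frac{n-1}{2}}-1$ deliberately omits $x^{n-1}y$ (and, when $n$ is even, also $x^{n-2}y$), which are precisely the top terms of the central class sum $\sum_j x^j y$ in the odd case and of the even/odd class sums in the even case; matching coefficients on these omitted monomials forces the central $y$-component to be zero.

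The bookkeeping in the previous paragraph is the only genuine obstacle, and it is purely a matter of tracking which basis monomials can occur. The two parities must be handled separately, since both the centre and the index ranges differ, but the guiding observation is the same in each: the proposed spanning set is arranged to miss exactly the ``extremal'' monomials ($x^0$, $x^{n/2}$, and the highest reflections $x^{n-1}y$, $x^{n-2}y$) that any nonzero central element is forced to contain. Hence no nontrivial $\f_{2^m}$-combination of the chosen elements lies in $Z(R)$, the corresponding $d_b$ are linearly independent, and with the dimension already established they form the claimed basis.
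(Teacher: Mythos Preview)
Your proof is correct and follows essentially the same line as the paper's: both rest on the $\f_{2^m}$-linear surjection $b\mapsto d_b$ from $R$ onto the inner derivations with kernel $Z(R)$ (Lemma~\ref{Lemma:RowenDer}), reducing the question to finding a complement of $Z(R)$ inside $R$.

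The organization differs slightly. The paper compresses your two steps into one by asserting that the chosen set $B$ together with the central basis $B_Z$ of Lemma~\ref{basisforZKG} is a basis of $\f_{2^m}D_{2n}$; this single claim simultaneously delivers the dimension (as $|B|=2n-|B_Z|$) and the independence modulo the centre (since $B$ is then a linear complement to $Z(R)$). You instead compute $2n-\dim Z(R)$ first and then argue independence by tracking which monomials the chosen set omits. Your support argument is exactly what verifies the paper's unproved assertion that $B\cup B_Z$ is a basis, so the two proofs are really the same in content; yours makes the bookkeeping explicit where the paper leaves it to the reader.
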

\begin{proof}
By Lemma~\ref{Lemma:RowenDer} the Lie commutator is anti-symmetric and so it is symmetric in characteristic 2. 
Let $a, b, c \in \f_{2^m} D_{2n}$. 
Then $d_{a + b}(c) = d_c(a + b) = d_c(a) + d_c(b) = d_a(c) + d_b(c)$ and so the inner derivations of $\f_{2^m} D_{2n}$ are closed under addition. If $k \in \f_{2^m}$, then $k d_b = d_{kb}$ and thus the inner derivations of $\f_{2^m} D_{2n}$ form a vector subspace of $Der(\f_{2^m} D_{2n})$. Let $B = \{x^i \mid i=1, 2, \dots, \floor{\frac{n-1}{2}}\}$  $\cup$ $\{x^iy \mid i = 0, 1,\dots,2\floor{\frac{n-1}{2}}-1\}$.

Case(1) $n$ is even. Write $n = 2c$.
By Lemma~\ref{basisforZKG}, $Z(\f_{2^m} D_{2n})$ is a $(\frac{n}{2}+3)$-dimensional subspace of $\f_{2^m} D_{2n}$ with basis $B_Z = \{1, x^c, x+x^{-1}, x^2+x^{-2}, \dots, x^{(c-1)} + x^{(c+1)}, \sum_{i=0}^{c-1} x^{2i}y, \sum_{i=0}^{c-1} x^{2i+1}y\}$. 
The union of the disjoint sets $B$ and $B_Z$ is a basis for $\f_{2^m} D_{2n}$.

Case(2) $n$ is odd. Write $n = 2c+1$.
By Lemma~\ref{basisforZKG}, $Z(\f_{2^m} D_{2n})$ is a $(\frac{n+3}{2})$-dimensional subspace of $\f_{2^m} D_{2n}$ with basis $B_Z = \{1, x+x^{-1}, x^2+x^{-2}, \dots, x^{c} + x^{-c}, \sum_{i=0}^{2c} x^{i}y\}$. 
Again, the disjoint union of $B$ and $B_Z$ is a basis for $\f_{2^m} D_{2n}$.

Write $a = z_a + \sum_{i=1}^{3\floor{\frac{n-1}{2}}} a_i b_i$, where $z_a \in Z(\f_{2^m} D_{2n})$, $a_i \in \f_{2^m}$ and $b_i \in B$,  for $i = 1, 2, \dots, 3\floor{\frac{n-1}{2}}$. $d_c = 0$ if and only if $c \in Z(\f_{2^m} D_{2n})$ and so 
\[d_a = d_{z_a} + \sum_{i = 1}^{3\floor{\frac{n-1}{2}}}d_{a_ib_i} = \sum_{i = 1}^{3\floor{\frac{n-1}{2}}}d_{a_ib_i}.\]
Therefore the set $\{ d_b \mid b \in B\} $ spans the set of inner derivations of $\f_{2^m} D_{2n}$. 
Moreover, if $\sum_{i = 1}^{3\floor{\frac{n-1}{2}}}d_{a_ib_i} = d_0$ then $\sum_{i = 1}^{3\floor{\frac{n-1}{2}}}a_ib_i \in Z(\f_{2^m} D_{2n})$ which implies that $a_i = 0$, for $i = 1, 2, \dots, 3\floor{\frac{n-1}{2}}$ and so the set $\{ d_b \mid b \in B \} $ forms a basis for the vector space of inner derivations of $\f_{2^m} D_{2n}$.
\end{proof}

The derivation problem asks whether every derivation from $L^1(G)$ to $M(G)$ is inner, where $G$ is a locally compact group and $M(G)$ is the multiplier algebra of $L^1(G)$. 
It was solved by Losert \cite{Losert}. 
We can ask a similar question for finite group algebras. Let $KG$ be a group algebra where both $K$ and $G$ are finite. 
Are all derivations of $KG$ inner?
Theorems~\ref{Thm:BasisDer(fpmD2n)} and \ref{Thm:basisofInn} show that the dimension of $Der(\f_{2^m} D_{2n})$ is greater than the dimension of the inner derivations of $\f_{2^m} D_{2n}$ and so not all derivations of $\f_{2^m} D_{2n}$ are inner. However does there exist an algebra $A \supset KG$ such that all derivations of $KG$ become inner in $A$? Theorem~\ref{thm:derprob} answers this question.

\begin{definition}\cite{Rowen}
Let $R$ be a ring and $\delta$ a derivation of $R$. 
The ring $R[x; \delta] = \left \{ \displaystyle \sum_{i=0}^{n} a_i x^i,\ a_i \in R \right \}$, 
where addition is performed componentwise and multiplication satisfies the relation $xa = ax +~\delta(a)$, for all $a \in R$ is called a \emph{differential polynomial ring}.
\end{definition}

\begin{theorem}\label{thm:derprob}
Let $G$ be a finite group and $KG$ be the group algebra over the finite field $K$. Let $A_d = \displaystyle \frac{KG[x; d]}{(x^2-1)}$, where $d \in Der(KG)$. Then all derivations $d$ of $KG$ are inner on $A_d$.
\end{theorem}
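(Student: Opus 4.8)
The plan is to implement $d$ as an inner derivation of $A_d$ by means of the adjoined variable $x$. By definition of the differential polynomial ring, $xa = ax + d(a)$ for every $a \in KG$, and this relation is preserved by the quotient map, so it holds in $A_d$. Hence, setting $b = -x \in A_d$, for each $a \in KG$ we have $ab - ba = -ax + xa = d(a)$; that is, $d$ agrees on $KG$ with the inner derivation $d_b$. Moreover $x^2 = 1$ in $A_d$, so $x$ (and therefore $b = -x$) is a unit and $A_d$ is a finite-dimensional $K$-algebra spanned by $\{a,\, ax : a \in KG\}$, the finite-algebra analogue of the multiplier algebra appearing in the classical derivation problem.

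To read the statement as ``$d$ is inner on $A_d$'' in the literal sense, I would first extend $d$ to a derivation of $A_d$. The natural choice is to set $D(x) = 0$ and extend by $K$-linearity and the Leibniz rule. The only thing to verify is that $D$ respects the defining relation $xa - ax - d(a) = 0$: applying $D$ and using $x\,d(a) - d(a)\,x = d^2(a)$ reduces the obstruction to $[D(x), a]$, which vanishes because $D(x) = 0$ is central. Since also $D(x^2 - 1) = D(x)x + xD(x) = 0$, the map $D$ carries the ideal $(x^2 - 1)$ into itself and descends to a derivation $\bar D$ of $A_d$. Finally $\bar D$ and $d_b$ are two derivations of $A_d$ that agree on the generating set $KG \cup \{x\}$ (both send $a \mapsto d(a)$ and $x \mapsto 0$), so they coincide, and the extension of $d$ is the inner derivation $d_{-x}$.

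I expect the main obstacle to be confirming that $A_d$ genuinely contains $KG$, equivalently that the composite $KG \to KG[x;d] \to A_d$ is injective, so that $A_d$ is a true enlargement $A \supset KG$ as promised in the motivating discussion. The difficulty is already visible from computing $x(xa)$ in two ways: using $x^2 = 1$ one gets $a$, while expanding with the skew relation gives $a + 2d(a)x + d^2(a)$, forcing the identity $2d(a)x + d^2(a) = 0$ in $A_d$ for every $a \in KG$. In particular each $d^2(a)$ lies in the kernel of the quotient map, so one must rule out a collapse of $KG$. In characteristic $2$ this identity reads $d^2(a) = 0$, so the embedding and the expected dimension $2\abs{G}$ hold precisely when $d^2 = 0$; controlling $d^2$ (or invoking a hypothesis that forces it to vanish) is therefore the crux, whereas the innerness of $d$ once the structure of $A_d$ is understood is the easy part.
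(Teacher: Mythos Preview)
Your first paragraph is exactly the paper's proof: the authors take the inner derivation $a\mapsto xa-ax$ of $A_d$ and observe from the skew relation $xa=ax+d(a)$ that it restricts to $d$ on $KG$. They stop there; your second paragraph (extending $d$ to a derivation of all of $A_d$ with $D(x)=0$ and matching it with $d_{-x}$) is additional work not present in the paper, where ``inner on $A_d$'' is read only as ``agrees on $KG$ with an inner derivation of $A_d$''.

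Your third paragraph flags a genuine issue that the paper does not address. The computation $x^2a=ax^2+2d(a)x+d^2(a)$ in $KG[x;d]$ is correct, so $2d(a)x+d^2(a)$ lies in the two-sided ideal $(x^2-1)$ for every $a\in KG$; in characteristic $2$ this forces each $d^2(a)$ to vanish in $A_d$. Derivations with $d^2\ne 0$ certainly exist (e.g.\ the derivation of $\f_2\langle g\mid g^4=1\rangle$ determined by $d(g)=g$, which Theorem~\ref{ThmDerFiniteAbelG} guarantees), so the canonical map $KG\to A_d$ need not be injective, and the question posed just before the theorem --- to exhibit an algebra $A\supset KG$ in which $d$ becomes inner --- is not actually settled by $A_d$ in general. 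This is a defect of the paper's claim rather than of your argument; what you wrote correctly proves the theorem as literally stated and coincides with the paper's own proof.
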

\begin{proof}
Let $D_x$ be the inner derivation of $A_d$ induced by $x$, that is $D_x \colon A_d \to A_d$, defined by $a \mapsto xa - ax$. By the multiplication relation of $A_d$, $xa - ax = d(a)$. Therefore the restriction of $D_x$ to $KG$ is equal to $d$.  
\end{proof}

\subsection{Applications to Coding Theory}\label{subsectCoding}

\begin{example}
    Let $C_{24} = \langle x \mid x^{24} = 1 \rangle$ and let $d \colon \f_2 C_{24} \to \f_2 C_{24}$ be the derivation defined by $x \mapsto 1+x+x^3+x^4+x^5+x^7+x^9+x^{12}$ (by Theorem~\ref{ThmDerFiniteAbelG} this uniquely defines a derivation). Then by Lemma~\ref{LogTablesTheorem}, $d(x^{2n}) = 0$ and $d(x^{2n+1}) = x^{2n}d(x)$, for $n \in \{0,1, \dots, 11\}$. The image of the group algebra under this derivation is a binary code of length $24$ and dimension $12$. A generator matrix $G_{24}$ of this code is given in Figure~\ref{fig:GenMat2412}.
    
    \begin{figure}[ht]
        \centering
        \caption{Generator matrix of the binary $[24, 12, 8]$ code defined by the derivation $d$.}
        \label{fig:GenMat2412}
    
    {\small
    \[G_{24} = 
    \left[ \begin{array}{cccccccccccccccccccccccc}
    
    1 & 1 & 0 & 1 & 1 & 1 & 0 & 1 & 0 & 1 & 0 & 0 & 1 & 0 & 0 & 0 & 0 & 0 & 0 & 0 & 0 & 0 & 0 & 0\\
    0 & 0 & 1 & 1 & 0 & 1 & 1 & 1 & 0 & 1 & 0 & 1 & 0 & 0 & 1 & 0 & 0 & 0 & 0 & 0 & 0 & 0 & 0 & 0\\
    0 & 0 & 0 & 0 & 1 & 1 & 0 & 1 & 1 & 1 & 0 & 1 & 0 & 1 & 0 & 0 & 1 & 0 & 0 & 0 & 0 & 0 & 0 & 0\\
    0 & 0 & 0 & 0 & 0 & 0 & 1 & 1 & 0 & 1 & 1 & 1 & 0 & 1 & 0 & 1 & 0 & 0 & 1 & 0 & 0 & 0 & 0 & 0\\
    0 & 0 & 0 & 0 & 0 & 0 & 0 & 0 & 1 & 1 & 0 & 1 & 1 & 1 & 0 & 1 & 0 & 1 & 0 & 0 & 1 & 0 & 0 & 0\\
    0 & 0 & 0 & 0 & 0 & 0 & 0 & 0 & 0 & 0 & 1 & 1 & 0 & 1 & 1 & 1 & 0 & 1 & 0 & 1 & 0 & 0 & 1 & 0\\
    1 & 0 & 0 & 0 & 0 & 0 & 0 & 0 & 0 & 0 & 0 & 0 & 1 & 1 & 0 & 1 & 1 & 1 & 0 & 1 & 0 & 1 & 0 & 0\\
    0 & 0 & 1 & 0 & 0 & 0 & 0 & 0 & 0 & 0 & 0 & 0 & 0 & 0 & 1 & 1 & 0 & 1 & 1 & 1 & 0 & 1 & 0 & 1\\
    0 & 1 & 0 & 0 & 1 & 0 & 0 & 0 & 0 & 0 & 0 & 0 & 0 & 0 & 0 & 0 & 1 & 1 & 0 & 1 & 1 & 1 & 0 & 1\\
    0 & 1 & 0 & 1 & 0 & 0 & 1 & 0 & 0 & 0 & 0 & 0 & 0 & 0 & 0 & 0 & 0 & 0 & 1 & 1 & 0 & 1 & 1 & 1\\
    1 & 1 & 0 & 1 & 0 & 1 & 0 & 0 & 1 & 0 & 0 & 0 & 0 & 0 & 0 & 0 & 0 & 0 & 0 & 0 & 1 & 1 & 0 & 1\\
    0 & 1 & 1 & 1 & 0 & 1 & 0 & 1 & 0 & 0 & 1 & 0 & 0 & 0 & 0 & 0 & 0 & 0 & 0 & 0 & 0 & 0 & 1 & 1\\
    \end{array} \right] 
    \]}
    
    \end{figure}
        Permuting the columns of $G_{24}$ using the permutation 
    \[(6,19,12,10,11,22,8,21,15,16,18,9,24,13,20)(7,23,17,14)\] 
    and then transforming it to reduced row echelon form produces the matrix given as the generator of the extended binary Golay code in \cite{Hill}. 
    So the image of $\f_2C_{24}$ under the derivation is equivalent to the extended binary Golay $[24,12,8]$ code.
    It has minimum distance $8$ and is a doubly even and self dual extremal code. 
\end{example}
    
\begin{figure}[ht]
        \centering
        \caption{The right hand block of a generator matrix of the binary $[48, 24, 12]$ code defined by the derivation $\delta$.}
        \label{fig:GenMat4824}
    
    {\small
    \[A = 
    \left[ \begin{array}{cccccccccccccccccccccccc}
    1 & 0 & 0 & 1 & 0 & 0 & 0 & 1 & 1 & 1 & 0 & 0 & 0 & 1 & 0 & 0 & 1 & 1 & 0 & 1 & 1 & 0 & 0 & 1 \\
    0 & 1 & 1 & 0 & 0 & 1 & 1 & 0 & 1 & 1 & 0 & 0 & 1 & 0 & 0 & 0 & 1 & 1 & 1 & 0 & 0 & 0 & 1 & 0 \\
    0 & 1 & 0 & 0 & 0 & 0 & 1 & 0 & 1 & 0 & 1 & 1 & 1 & 0 & 0 & 1 & 1 & 1 & 0 & 1 & 0 & 1 & 0 & 0 \\
    1 & 0 & 0 & 0 & 1 & 0 & 0 & 0 & 0 & 1 & 1 & 1 & 0 & 1 & 1 & 0 & 1 & 1 & 1 & 0 & 0 & 0 & 0 & 1 \\
    0 & 0 & 0 & 1 & 0 & 0 & 0 & 0 & 1 & 0 & 1 & 0 & 1 & 1 & 1 & 0 & 0 & 1 & 1 & 1 & 0 & 1 & 0 & 1 \\
    0 & 1 & 0 & 0 & 0 & 1 & 0 & 0 & 1 & 1 & 0 & 1 & 0 & 1 & 0 & 1 & 0 & 1 & 0 & 1 & 1 & 0 & 1 & 0 \\
    0 & 1 & 1 & 0 & 0 & 0 & 1 & 0 & 1 & 1 & 1 & 0 & 0 & 0 & 1 & 1 & 0 & 1 & 1 & 1 & 1 & 1 & 1 & 1 \\
    1 & 0 & 0 & 0 & 0 & 0 & 0 & 0 & 1 & 1 & 1 & 1 & 0 & 0 & 0 & 1 & 1 & 0 & 0 & 0 & 1 & 1 & 1 & 1 \\
    1 & 1 & 1 & 0 & 1 & 1 & 1 & 1 & 1 & 0 & 1 & 1 & 0 & 1 & 0 & 0 & 1 & 1 & 1 & 0 & 0 & 1 & 0 & 0 \\
    1 & 1 & 0 & 1 & 0 & 1 & 1 & 1 & 0 & 0 & 1 & 1 & 0 & 0 & 0 & 0 & 0 & 1 & 0 & 1 & 1 & 0 & 0 & 0 \\
    0 & 0 & 1 & 1 & 1 & 0 & 1 & 1 & 1 & 1 & 1 & 0 & 1 & 1 & 0 & 1 & 0 & 0 & 1 & 1 & 1 & 0 & 0 & 1 \\
    0 & 0 & 1 & 1 & 0 & 1 & 0 & 1 & 1 & 1 & 0 & 0 & 1 & 1 & 0 & 0 & 0 & 0 & 0 & 1 & 0 & 1 & 1 & 0 \\
    0 & 1 & 1 & 0 & 1 & 0 & 0 & 0 & 0 & 0 & 1 & 1 & 0 & 0 & 1 & 1 & 1 & 0 & 1 & 0 & 1 & 1 & 0 & 0 \\
    1 & 0 & 0 & 1 & 1 & 1 & 0 & 0 & 1 & 0 & 1 & 1 & 0 & 1 & 1 & 1 & 1 & 1 & 0 & 1 & 1 & 1 & 0 & 0 \\
    0 & 0 & 0 & 1 & 1 & 0 & 1 & 0 & 0 & 0 & 0 & 0 & 1 & 1 & 0 & 0 & 1 & 1 & 1 & 0 & 1 & 0 & 1 & 1 \\
    0 & 0 & 1 & 0 & 0 & 1 & 1 & 1 & 0 & 0 & 1 & 0 & 1 & 1 & 0 & 1 & 1 & 1 & 1 & 1 & 0 & 1 & 1 & 1 \\
    1 & 1 & 1 & 1 & 0 & 0 & 0 & 1 & 1 & 0 & 0 & 0 & 1 & 1 & 1 & 1 & 0 & 0 & 0 & 0 & 0 & 0 & 0 & 1 \\
    1 & 1 & 1 & 1 & 1 & 1 & 1 & 0 & 1 & 1 & 0 & 0 & 0 & 1 & 1 & 1 & 0 & 1 & 0 & 0 & 0 & 1 & 1 & 0 \\
    0 & 1 & 0 & 1 & 1 & 0 & 1 & 0 & 1 & 0 & 1 & 0 & 1 & 0 & 1 & 1 & 0 & 0 & 1 & 0 & 0 & 0 & 1 & 0 \\
    1 & 0 & 1 & 0 & 1 & 1 & 1 & 0 & 0 & 1 & 1 & 1 & 0 & 1 & 0 & 1 & 0 & 0 & 0 & 0 & 1 & 0 & 0 & 0 \\
    1 & 0 & 0 & 0 & 0 & 1 & 1 & 1 & 0 & 1 & 1 & 0 & 1 & 1 & 1 & 0 & 0 & 0 & 0 & 1 & 0 & 0 & 0 & 1 \\
    0 & 0 & 1 & 0 & 1 & 0 & 1 & 1 & 1 & 0 & 0 & 1 & 1 & 1 & 0 & 1 & 0 & 1 & 0 & 0 & 0 & 0 & 1 & 0 \\
    0 & 1 & 0 & 0 & 0 & 1 & 1 & 1 & 0 & 0 & 0 & 1 & 0 & 0 & 1 & 1 & 0 & 1 & 1 & 0 & 0 & 1 & 1 & 0 \\
    1 & 0 & 0 & 1 & 1 & 0 & 1 & 1 & 0 & 0 & 1 & 0 & 0 & 0 & 1 & 1 & 1 & 0 & 0 & 0 & 1 & 0 & 0 & 1 \\
    \end{array} \right] 
    \]}
    
    \end{figure}

\begin{example}
    Let $C_{48} = \langle x \mid x^{48} = 1 \rangle$ and $\delta \colon \f_2 C_{48} \to \f_2 C_{48}$ be the derivation defined by 
    \[x \mapsto 1+x^{24}+x^{27}+x^{31}+x^{32}+x^{33}+x^{37}+x^{40}+x^{41}+x^{43}+x^{44}+x^{47}.\] 
    Again by Theorem~\ref{ThmDerFiniteAbelG} this uniquely defines a derivation of $\f_2 C_{48}$.
    The image of the group algebra under this derivation is a binary $[48, 24, 12]$ doubly even self dual code (verified using GAP 4.8.6 \cite{GAP4}). It is equivalent to the extended binary quadratic residue code of length $48$ \cite{Houghten2003}. A generator matrix for this code is given by the block matrix $\left[ I_{24} \mid A \right]$, where $I_{24}$ is the identity of the ring of $24 \times 24$ matrices over $\f_2$ and $A$ is the matrix given in Figure~\ref{fig:GenMat4824}.
\end{example}


\vspace{0.3cm}

\clearpage

{\bf Acknowledgements.} This research has received funding from the Institute of Technology Sligo President’s Bursary Award and the Institute of Technology Sligo Capacity Building Fund. The authors also thank Martin Mathieu for useful conversations on the subject of derivations and the reviewers for their helpful comments.

\printbibliography

\end{document}